\theoremstyle{plain}
\numberwithin{equation}{section}
\newtheorem{theorem}{Theorem}[section]
\newtheorem{lemma}[theorem]{Lemma}
\newtheorem{remark}[theorem]{Remark}
\newtheorem{proposition}[theorem]{Proposition}
\title[Tate cohomology of generic representations of ${\rm GL}_3$]{Tate cohomology and local base change of generic representations of ${\rm GL}_3$ -- non-banal case}
\author{Sabyasachi Dhar}
\begin{document}
	\maketitle
\begin{abstract}
Let $F$ be a finite extension of $\mathbb{Q}_p$, and let $E$ be a finite Galois extension of $F$ with degree of extension $l$, where $l$ and $p$ are distinct odd primes. Let $\pi_F$ be an integral, $l$-adic generic representation of ${\rm GL}_3(F)$, and let $\pi_E$ be the base change lifting of $\pi_F$ to ${\rm GL}_3(E)$. Let $J_l(\pi_F)$ (resp. $J_l(\pi_E)$) be the unique generic sub--quotient of the mod-$l$ reduction of $\pi_F$ (resp. $\pi_E$). In this article, using the local converse theorem over local Artinian $\overline{\mathbb{F}}_l$-algebras, we prove that the Frobenius twist of $J_l(\pi_F)$ is isomorphic to the Tate cohomology group $\widehat{H}^0({\rm Gal}(E/F),J_l(\pi_E))$.  The result of this article removes the hypothesis that the prime $l$ does not divide the pro-order of ${\rm GL}_2(F)$ in the previous work \cite{nadimpalli2024tate}. 
\end{abstract}

\section{Introduction}
Let $F$ be a finite extension of $\mathbb{Q}_p$, and let 
$E$ be a Galois extension of $F$ with $[E:F]$ a prime 
number $l$ distinct from $p$. The theory of base change
lifting provides a map 
$${\rm BC}_{E/F}:{\rm Irr}_{\overline{\mathbb{Q}}_l}(F)\rightarrow {\rm Irr}_{\overline{\mathbb{Q}}_l}(E),$$
where ${\rm Irr}_{\overline{\mathbb{Q}}_l}(K)$ denotes the set of irreducible
smooth ${\overline{\mathbb{Q}}_l}$-representations of ${\rm GL}_n(K)$. Here, $K$ denotes a non-Archimedean local field with residue
characteristic $p$. For 
$\pi\in {\rm Irr}_{\overline{\mathbb{Q}}_l}(F)$, the 
representation $\Pi={\rm BC}_{E/F}(\pi)$ is stable under the 
action of ${\rm Gal}(E/F)$. Moreover, the representation
$\pi$ is integral if and only if $\Pi$ is integral (see Lemma \ref{bc_integral}). In this 
article, we explicitly compute the Tate cohomology 
group $\widehat{H}^i({\rm Gal}(E/F), r_l(\Pi))$,
where $r_l(\Pi)$ is the mod-$l$ reduction of $\Pi$--whenever
it is integral. The 
primary novelty of this article is that we consider all 
values of $l$, and we use methods of local converse theorem over Artin local $\overline{\mathbb{F}}_l$-algebras. 
In particular, the present work includes the case where
$l=n=3$; thus allowing the situation where the base change lift $\Pi$ is non-cuspidal. 

In their foundational work \cite[Conjecture 6.3]{Venkatesh-Treumann}, Treumann--Venkatesh  
conjecture that one of Jordan--Holder factors 
of the $\overline{\mathbb{F}}_l$-representation  
$\widehat{H}^i({\rm Gal}(E/F), r_l(\Pi))$ of 
${\rm GL}_n(F)$ is the 
mod-$l$ reduction of $\pi$ twisted by the Frobenius 
automorphism of the field of coefficients $\overline{\mathbb{F}}_l$. 
Their conjectures predict that the group 
$\widehat{H}^i({\rm Gal}(E/F), r_l(\Pi))$ is 
cuspidal whenever $\pi$ is cuspidal. Very interestingly, 
in the case where $l=n=3$, and when $\Pi$ is non-cuspidal,
and thus a principal series representation, the group
$\widehat{H}^i({\rm Gal}(E/F), r_l(\Pi))$ are 
conjectured to be cuspidal. Thus producing a mod-$l$ cuspidal representation starting with a non-cuspidal representation. Our main theorem (see Theorem \ref{intro_3_thm} for the precise statement) proves Treumann--Venkatesh conjecture for all generic $\overline{\mathbb{Q}}_l$-representations of ${\rm GL}_3(E)$, obtained as base change lifting of integral, generic $\overline{\mathbb{Q}}_l$-representations of ${\rm GL}_3(F)$. As a consequence of this result, we get that the Tate cohomology group $\widehat{H}^i({\rm Gal}(E/F), r_l(\Pi))$ is isomorphic to the cuspidal representation  $r_l(\pi)\otimes_{\rm Frob}\overline{\mathbb{F}}_l$ (Theorem \ref{bc_cuspidal}).

We now introduce some notations to state the main result. These are very standard throughout and are chosen to be consistent with \cite{nadimpalli2024tate}, to which we shall often refer. 
Let $F$ be a finite extension of $\mathbb{Q}_p$, and let $E$ be a finite Galois extension of $F$ with degree of extension $l$, where $l$ and $p$ are distinct primes. Let $\Gamma$ be the Galois group of the extension $E/F$. Let $(\pi_F,V)$ be a smooth, integral, $l$-adic generic representation of ${\rm GL}_n(F)$. Let $\mathcal{L}$ be a ${\rm GL}_n(F)$ invariant lattice in $V$. The semi--simplification of the mod-$l$ representation $\mathcal{L}\otimes_{\overline{\mathbb{Z}}_l}\overline{\mathbb{F}}_l$ is independent of the choice of $\mathcal{L}$ (see \cite[III, 5.11.a and 5.11.b]{Vigneras-l-modular}), and it is called the mod-$l$ reduction of $\pi_F$, denoted by $r_l(\pi_F)$. In general, $r_l(\pi_F)$ is not generic but it has a unique generic sub--quotient, denoted by $J_l(\pi_F)$.
Let $\pi_E$ be the base change lift of $\pi_F$ to ${\rm GL}_n(E)$ (see Subsection \ref{local_base_change} for the definition). Then $\pi_E$ is also integral, generic, and there is an isomorphsim $T:\pi_E\xrightarrow{\sim}\pi_E^\gamma$, for all $\gamma \in \Gamma$. The generic component $J_l(\pi_E)$ of the mod-$l$ reduction of $\pi_E$ is also stable under the action of $\Gamma$--induced by $T$, and we have the Tate cohomology $\widehat{H}^i(\Gamma, J_l(\Pi_E))$, which is a mod-$l$ representation of ${\rm GL}_n(F)$. The main theorem of this paper is the following.
\begin{theorem}\label{intro_3_thm}
Let $F$ be a finite extension of $\mathbb{Q}_p$, and let $E$ be a finite Galois extension of $F$ with $[E:F]=l$, where $l$ and $p$ are distinct odd primes. Let $\pi_F$ be a smooth, integral, generic, $\overline{\mathbb{Q}}_l$-representation of ${\rm GL}_3(F)$, and let $\pi_E$ be the base change lifting of $\pi_F$ to ${\rm GL}_3(E)$. Then the Frobenius twist $J_l(\pi_F)^{(l)}:=J_l(\pi_F)\otimes_{\rm Frob} \overline{\mathbb{F}}_l$ is the unique generic sub--quotient of the Tate cohomology group $\widehat{H}^0(\Gamma,J_l(\pi_E))$.
\end{theorem}
As discussed earlier, when $l=3$ and $\pi_F$ is cuspidal , then the base change lift $\pi_E$ is either a cuspidal representation or a principal series representation. In the previous work \cite[Theorem 6.7]{nadimpalli2024tate}, the second case was not dealt because the hypothesis that $l$ does not divide the pro-order of ${\rm GL}_2(F)$ implies $l\ne 3$, in which case $\pi_E$ is always cuspidal. So removing the hypothesis on $l$ in Theorem \ref{intro_3_thm} allows the case where the base change lift $\pi_E$ is non-cuspidal. In both cases, the Tate cohomology group $\widehat{H}^0(\Gamma,r_l(\pi_E))$ is cuspidal (Lemma \ref{Tate_irr}), and is infact isomorphic to the mod-$l$ cuspidal representation $r_l(\pi_F)^{(l)}$, by Theorem \ref{intro_3_thm}. This reveals a striking feature  of Treumann--Venkatesh's conjecture that the Tate cohomology of certain $\Gamma$ stable non-cuspidal mod-$l$ representations of ${\rm GL}_3(E)$ produces mod-$l$ cuspidal representations of ${\rm GL}_3(F)$.

T. Feng studied Treumann--Venkatesh's conjecture for general class of reductive algebraic groups (see \cite{feng2024smith}, \cite{feng2023modular}) defined over a local field using recent advances due to Genestier--Lafforgue and Fargues--Scholze. In the previous work \cite{nadimpalli2024tate} with S. Nadimpalli, the above theorem was proved for the base change lifting of integral $l$-adic generic representations of ${\rm GL}_n(F)$, under the hypothesis that  $l$ does not divide $|{\rm GL}_{n-1}(k_F)|$, for $n\geq 3$, where $k_F$ is the finite residue field of $F$. The condition on $l$ is required for certain vanishing result over $\overline{\mathbb{F}}_l$--which, in general, fails for arbitrary prime $l$ (see \cite[Section 4]{moss_mod_l_nilpotent_gamma}). In this article, exploiting the theory of smooth representations over Artin local $\overline{\mathbb{F}}_l$-algebras, we prove Theorem \ref{intro_3_thm}, where we remove the hypothesis that $l$ does not divide $\lvert{\rm GL}_2(k_F)\rvert$. We use the local converse theorem over Artin local $\overline{\mathbb{F}}_l$-algebras (see \cite[Theorem 1.2]{moss_mod_l_nilpotent_gamma} for the precise statement). The following is the essential step: Let $\psi:F\rightarrow W(\overline{\mathbb{F}}_l)^\times$ be a non-trivial additive character, where $W(\overline{\mathbb{F}}_l)$ is the ring of Witt vectors of $\overline{\mathbb{F}}_l$. Using \cite[Proposition 6.3]{nadimpalli2024tate}, we get that the Tate cohomology group $\widehat{H}^0(\Gamma,J_l(\pi_E))$ has a unique generic sub--quotient, say $\Xi$. Then, we prove the following identity of local $\gamma$-factors: 
$$
\gamma(X,\Xi,\eta,\psi) = 
\gamma(X,J_l(\pi_F)^{(l)},\eta,\psi),
$$ 
where $\eta$ varies over the set of all co-Whittaker $R[F^\times]$ modules and $R$ varies over the set of all Artin local $\overline{\mathbb{F}}_l$-algebras. In order to obtain the above equality, we use the machineries of Rankin-Selberg theory over Noetherian $W(\overline{\mathbb{F}}_l)$-algebras, provided by the work of \cite{Moss_Gammafactors}. The proof also uses properties of Deligne--Langlands $\gamma$-factors over Noetherian $W(\overline{\mathbb{F}}_l)$-algebras--which are made available by the seminal work of D. Helm (\cite[Theorem 1.1]{helm2015deligne}). 

The paper is organized as follows: In Section $2$, we recall some basic notions of smooth representations of $p$-adic groups. In Section $3$, we review Rankin--Selberg theory over Noetherian $W(\overline{\mathbb{F}}_l)$-algebras. In Section $4$, we discuss the Weil--Deligne representations. In Section $5$, we recall the local Langlands correspondence and the notion of local base change lifting. In Section $6$, we set up some initial results on Tate cohomology. In Section $7$, we prove Theorem \ref{intro_3_thm}. 
	
\section{Preliminaries}
In this part, we recall some standard notions in the representation theory over arbitrary Noetherian rings. For a reference, see \cite{Moss_Gammafactors}, \cite{Helm_Co-whittaker}.

\subsection{}
Let $l$ and $p$ be two distinct prime numbers. For any finite extension $K$ of $\mathbb{Q}_p$, we denote by $\mathfrak{o}_K$ the ring of integers of $K$ and the maximal ideal of $\mathfrak{o}_K$ is denoted by $\mathfrak{p}_K$. Let $\varpi_K$ be the uniformizer of $K$. Let $k_K$ denote the residue field $\mathfrak{o}_K/\mathfrak{p}_K$, and let $q_K$ be the cardinality of $k_K$. Let $\upsilon_K$ be the normalized discrete valuation on $K$, and let $\nu_K$ be the absolute value on $K$ corresponding to $\upsilon_K$. For each positive integer $n$, we denote by $G_n(K)$ the group of all $n\times n$ invertible matrices with entries in $K$.

We fix an algebraic closure $\overline{\mathbb{Q}}_l$ of $\mathbb{Q}_l$. Let $\overline{\mathbb{Z}}_l$ be the integral clsoure of $\mathbb{Z}_l$ in $\overline{\mathbb{Q}}_l$. Let $\Lambda$ denote the ring of Witt vectors of $\overline{\mathbb{F}}_l$--which is, by definition, the $l$-adic completion of the ring of integers of the maximal unramified extension of $\mathbb{Q}_l$ in $\overline{\mathbb{Q}}_l$. 
Throughout this article, we deal with an important class of Noetherian $\Lambda$-algebras, namely Artin local $\overline{\mathbb{F}}_l$-algebras. By definition, an Artin local $\overline{\mathbb{F}}_l$-algebra $R$ is a finite dimensional local $\overline{\mathbb{F}}_l$-algebra with unique prime ideal, denoted by $\mathfrak{m}_R$, such that the quotient $R/\mathfrak{m}_R$ is isomorphic to $\overline{\mathbb{F}}_l$ and the composition $\overline{\mathbb{F}}_l\rightarrow R\rightarrow\overline{\mathbb{F}}_l$ is the identity map. 

\subsection{}\label{pre_1} 
Let $G$ be a locally compact and totally disconnected group. Let $R$ be a Noetherian ring. Let $(\pi,V)$ be a smooth $R$-representation of $G$ which means $V$ is an $R$-module and every vector in $V$ is stabilized by a compact open subgroup of $G$. The representation $\pi$ is called {\it admissible} if, for each compact open subgroup $J$ of $G$, the set of $J$-fixed vectors $V^J$ is finitely generated as an $R$-module. In this paper, all representations are assumed to be smooth. An $R$-representation of $G$ is also called an $R[G]$-module. The representation $(\pi,V)$ is called $l$-adic if $R=\overline{\mathbb{Q}}_l$, and $(\pi,V)$ is called $l$-modular if $R=\overline{\mathbb{F}}_l$. For any closed subgroup $H$ of $G$, let ${\rm Ind}_H^G$ and ${\rm ind}_H^G$ be the smooth induction functor and compact induction functor respectively.

\subsubsection{Cuspidal representation}
An irreducible smooth $R$-representation $\pi$ of $G_n(K)$ is called {\it cuspidal} (resp. {\it supercuspidal}) if $\pi$ does not occur as a quotient (resp. sub--quotient) of parabolically induced representation ${\rm ind}_P^{G_n(K)}(\tau)$, for all proper parabolic subgroup $P$ of $G_n(K)$ and all irreducible $R$-representation $\tau$ of the Levi subgroup of $P$.

When $R=\overline{\mathbb{Q}}_l$, then $\pi$ is cuspidal 
if and only if $\pi$ is supercuspidal. But when $R=\overline{\mathbb{F}}_l$, there are cuspidal 
representations of $G_n(K)$ which are not supercuspidal. For details, see \cite[Section 2.5,
Chapter 2]{Vigneras-l-modular}.

\subsubsection{Generic representation}\label{gen_rep}
Let $\mathbb{Q}_l^{un}$ denote the maximal unramified extension of $\mathbb{Q}_l$ in $\overline{\mathbb{Q}}_l$ with ring of integers $\mathbb{Z}_l^{un}$. Then the ring of Witt vectors $\Lambda$ is the $l$-adic completion of $\mathbb{Z}_l^{un}$. Fix a non-trivial additive character $\psi_K : K\rightarrow (\mathbb{Z}_l^{un})^\times$. For a Noetherian $\mathbb{Z}_l^{un}$-algebra $R$, the composition $K\xrightarrow{\psi_K} \Lambda^\times\rightarrow R^\times$ is also denoted as $\psi_K$, by abuse of notation. Let $N_n(K)$ be the group of unipotent upper triangular matrices in $G_n(K)$. Let $\Theta_K$ be the non-degenerate character of $N_n(K)$, defined as 
\begin{equation}\label{non_deg_char}
\Theta_K((x_{ij})_{i,j=1}^n) :=
\psi_K(x_{12} + ....+ x_{n(n-1)}),
\end{equation}
for $(x_{ij})_{i,j=1}^n\in N_n(K)$.
We denote by $\overline{\psi}_K$ and $\overline{\Theta}_K$ the mod-$l$ reductions of $\psi_K$ and $\Theta_K$, respectively. A smooth $R$-representation $(\pi,V)$ is called {\it Whittaker type} if the space
$$
{\rm Hom}_{G_n(K)}\big(\pi, {\rm Ind}_{N_n(K)}^{G_n(K)}(\Theta_K)\big)
$$
is a free $R$-module of rank one. An $l$-adic (or $l$-modular) representation $\pi$ of $G_n(K)$ is called generic if $\pi$ is irreducible and is of Whittaker type. 
We denote by $\mathcal{A}_{C,K}^{gen}(n)$ the set of isomorphism classes of generic $C$-representations of $G_n(K)$, where $C=\overline{\mathbb{Q}}_l$ or $\overline{\mathbb{F}}_l$.

\subsubsection{Whittaker model and Kirillov model}
Let $(\pi,V)$ be an $R$-representation of $G_n(K)$. We assume that $(\pi,V)$ is of Whittaker type. Using Frobenius reciprocity, we get that the space ${\rm Hom}_{N_n(K)}(\pi,\Theta_K)$ is a free $R$-module of rank one. Let $\mathcal{W}_{\pi}$ be a non-zero element in ${\rm Hom}_{N_n(K)}(\pi,\Theta_K)$. Let
$\mathbb{W}(\pi,\psi_K)\subseteq{\rm Ind}_{N_n(K)}^{G_n(K)}(\Theta_K)$
be the space consisting of functions $W_v$, $v \in V$, 
where
$$
W_v(g):=\mathcal{W}_{\pi}\big(\pi(g)v\big),
$$
for $g \in G_n(K)$. The map $v\mapsto W_v$ induces a $G_n(K)$ equivariant surjection from $(\pi,V)$ to $\mathbb{W}(\pi,\psi_K)$. The space $\mathbb{W}(\pi,\psi_K)$ is called an $R$-valued {\it Whittaker model} of $(\pi,V)$ with respect to $\psi_K$.

Consider the space $\mathbb{K}(\pi,
\psi_K)$ of all elements $W$ restricted to $P_n(K)$, where
$W$ varies over $\mathbb{W}(\pi,
\psi_K)$. Then the space
$\mathbb{K}(\pi,\psi_K)$ is
$P_n(K)$ invariant, and is called the {\it Kirillov model} of $\pi$.
Moreover, ${\rm ind}_{N_n(K)}^{P_n(K)}
(\Theta_K) \subseteq \mathbb{K}(\pi,
\psi_K)$. When $R$ is a field, then $\mathbb{K}(\pi,
\psi_K) = {\rm ind}_{N_n(K)}
^{P_n(K)}(\Theta_K)$ if and only if $\pi$ is cuspidal. 

\subsubsection{Co-Whittaker modules}
Let $(\pi,V)$ be a smooth $R$-representation of $G_n(K)$. Let $V_{N_n(K),\Theta_K}$ be the quotient space $V/V(N_n(K),\Theta_K)$, where the space $V(N_n(K),\Theta_K)$ is the $R$-submodule of $V$ generated by the following set of vectors: 
$$ \big\{\pi(x)v-\Theta_K(x)v:x\in N_n(K), v\in V\big\}. $$ 
The representation $(\pi,V)$ is called {\it co-Whittaker} if the following holds:
\begin{enumerate}
	\item $(\pi,V)$ is admissible,
	\item $V_{N_n(K),\Theta_K}$ is a free $R$-module of rank one,
	\item If $W$ is a quotient of $V$ with $W^{(n)}=0$, then $W=0$.
\end{enumerate}
By definition, a co-Whittaker $R$-representation $(\pi,V)$ of $G_n(K)$ possesses an $R$-valued Whittaker model. Moreover, the representation $(\pi,V)$ admits a central character, denoted by $\omega_\pi$.

\subsubsection{Nilpotent lifts}\label{nil_GLn}
Let $R$ be an Artin local $\overline{\mathbb{F}}_l$-algebra. Let $\pi$ be an $l$-modular generic representation of $G_n(K)$. A {\it nilpotent lift} of $\pi$ to the algebra $R$ is an admissble $R[G_n(K)]$-submodule $\widetilde{\pi} \subseteq {\rm Ind}_{N_n(K)}^{G_n(K)}(\Theta_K)$ such that 
$$
\widetilde{\pi}\otimes_R\overline{\mathbb{F}}_l
\simeq \mathbb{W}(\pi,\overline{\psi}_K). 
$$
For an Artin local $\overline{\mathbb{F}}_l$-algebra $R$, we denote by $\mathcal{A}_{R,K}^{{nil}}(n)$ the set of isomorphism classes of all nilpotent lifts of objects in $\mathcal{A}_{\overline{\mathbb{F}}_l,K}^{gen}(n)$. For $n=1$, the objects in $\mathcal{A}_{\overline{\mathbb{F}}_l,K}^{gen}(1)$ are the $\overline{\mathbb{F}}_l$-valued characters of $K^\times$. The nilpotent lift of one such character, say $\overline{\chi}$, to $R$ is given by a free $R$-module $\mathcal{M}$, on which $K^\times$ acts via an $R$-valued character $\chi$ of $K^\times$ such that the reduction of $\chi$ modulo $\mathfrak{m}_R$ is equal to $\overline{\chi}$.

\section{Rankin-Selberg gamma factors over arbitrary rings}\label{Rankin_Selberg_gamma_factors}
In this section, we review the Rankin--Selberg theory over arbitrary Noetherian $\Lambda$-algebras for the pair $(G_3(K), K^\times)$. We refer to \cite{Moss_Gammafactors}, \cite{Helm_Co-whittaker} for the general theory.
\subsection{}
We now introduce some notations. For each $r\in\mathbb{Z}$, we denote by $G_n^r(K)$ the following set
$$ \big\{g\in G_n(K) : \upsilon_K(\det(g)) = r\big\}. $$ 
We let $X_K$ be the right coset space $N_n(K) \setminus G_n(K)$. For each integer $r$, let $X_K^r$ denote the coset space $\big\{N_n(K)g: g\in G_n^r(K)\big\}$. In particular, for $n=1$, we have $X_K^r=\varpi_K^r\mathfrak{o}_K^\times$.
We denote by $w_n$ and $w_{n,m}$ the following matrices of $G_n(K)$:
\begin{center} $w_n =
\begin{pmatrix}
	0  &  & &  1  \\
	&  & . \\
	& .   \\
	1 &  &  & 0
\end{pmatrix}$,\,\, $w_{n,m}=
\begin{pmatrix}
	I_{n-m} & 0\\
	0 & w_m
\end{pmatrix}$. 
\end{center}
\subsection{Rankin--Selberg functional equations}
Let $A$ and $B$ be two Noetherian $\Lambda$-algebras, and let $R=A\otimes_{\Lambda}B$. Let $\pi$ be a $A[G_3(K)]$-module of Whittaker type and let $\chi$ be a $B$-valued character of $K^\times$. For $W \in \mathbb{W}(\pi,\psi_K)$, the integrals
$$
c_r^K(W,\chi;0)=\int_{\varpi_K^r\mathfrak{o}_K^\times}W
\begin{pmatrix}
	g & 0 & 0\\
	0 & 1 & 0\\
	0 & 0 & 1
\end{pmatrix}
\otimes\chi(g)\,dg
$$
and
$$
c_r^K(W,\chi;1)=\int_K\int_{\varpi_K^r
	\mathfrak{o}_K^\times}W
\begin{pmatrix}
	g & 0 & 0\\
	x & 1 & 0\\
	0 & 0 & 1
\end{pmatrix}
\otimes\chi(g)\,dg\,dx$$
are well-defined. Then, for each $j\in\{0,1\}$, the series
\begin{equation}\label{RS_series}
	\sum_{r\in\mathbb{Z}}c_r^K(W,\chi;j)X^r
\end{equation}
is an element of $R[[X]][X^{-1}]$, and it is called the Rankin-Selberg series associated with the pair $(\pi,\pi')$. In addition, if $\pi$ is admissible and finitely generated over $A[G_3(K)]$, then the series (\ref{RS_series}) lies in $S^{-1}(R[X,X^{-1}])$ (see \cite[Theorem 3.2]{Moss_Gammafactors}), where $S$ is the multiplicative subset of $R[X,X^{-1}]$, consisting of polynomials of the form $\sum_{r=-s}^ta_rX^r$ with $a_s$ and $a_t$ being units in $R$. For any function $f$ on $G_n(K)$, let $\widetilde{f}$ be the function on $G_n(K)$, defined by $\widetilde{f}(g)=f(w_n(g^{-1})^t)$, for $g\in G_n(K)$. Then we have the following result:
\begin{theorem}\cite[Theorem 5.3]{Moss_Gammafactors}\label{func_equation}
Let $A$ and $B$ be two Noetherian $\Lambda$-algebras. Let $\pi$ be a co-Whittaker $A[G_3(K)]$ module, and let $\eta$ be a $B$-valued character of $K^\times$. Then, there exists a unique element $\gamma_R(X,\pi,\eta,\psi_K)$ in $S^{-1}(R[X,X^{-1}])$ such that
\begin{equation}\label{FE_1}
\sum_{r\in\mathbb{Z}}c_r^K\big(w_{3,1}\widetilde{W},
\eta^{-1};1\big)q_K^rX^{-r} = \gamma_R(X,\pi,\eta,\psi_K)
\sum_{r\in\mathbb{Z}}c_r^K(W,\eta;0)X^r
\end{equation}
and
\begin{equation}\label{FE_2}\sum_{r\in\mathbb{Z}}c_r^K
\big(w_{3,1}\widetilde{W},\eta^{-1}
;0\big)q_K^rX^{-r} = \gamma_R(X,\pi,\eta,\psi_K)
\sum_{r\in\mathbb{Z}}c_r^K(W,\eta;1)X^r,
\end{equation}
for all $W\in\mathbb{W}(\pi,\psi_K)$. 
\end{theorem}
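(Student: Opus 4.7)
The plan is to follow the Jacquet--Piatetski-Shapiro--Shalika framework for the local functional equation of Rankin--Selberg integrals, adapted to co-Whittaker modules over a Noetherian $W(\overline{\mathbb{F}}_l)$-algebra via the machinery of \cite{Moss_Gammafactors} and \cite{Helm_Co-whittaker}. The rationality statement already cited from \cite[Theorem 3.2]{Moss_Gammafactors} guarantees that for every $W \in \mathbb{W}(\pi,\psi_K)$ and every $j \in \{0,1\}$ both series $\sum_r c_r^K(W,\chi;j)X^r$ and $\sum_r c_r^K(w_{3,1}\widetilde{W},\widetilde{\chi};j)q_K^r X^{-r}$ lie in $S^{-1}(R[X,X^{-1}])$. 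Consequently all four expressions appearing in (\ref{FE_1}) and (\ref{FE_2}) sit in a common coefficient ring, so it is meaningful to ask for a proportionality factor.

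Next, I would interpret each side of (\ref{FE_1}) and (\ref{FE_2}) as the value of an $R$-bilinear pairing
\begin{equation*}
B_j,\ \widetilde{B}_j\colon \mathbb{W}(\pi,\psi_K)\times\mathbb{W}(\chi,\psi_K^{-1}) \longrightarrow S^{-1}(R[X,X^{-1}]),
\end{equation*}
and verify that $B_{1-j}$ and $\widetilde{B}_j$ share the same quasi-invariance under right translation by the mirabolic subgroup of $G_3(K)$, once one builds in the $X$-twist $g\mapsto X^{\upsilon_K(\det g)}$. The matching of equivariances is essentially bookkeeping: the $N_3(K)$-equivariance against $\Theta_K$ comes from the Whittaker nature of $W$, the Weyl element $w_3$ entering $\widetilde{W}$ converts left equivariance into right equivariance, and the shift between $j$ and $1-j$ reflects how $w_{3,1}$ interacts with the unipotent integration in the second row.

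The crux is then the multiplicity-one assertion that the $S^{-1}(R[X,X^{-1}])$-module of such equivariant pairings is free of rank one, lifting the classical Bernstein--Zelevinsky uniqueness of $\mathrm{GL}_3\times \mathrm{GL}_1$ bilinear forms to the co-Whittaker setting using Helm's structural results on co-Whittaker modules (\cite[Section 6]{Helm_Co-whittaker}) together with the derivative analysis of \cite[Section 3]{Moss_Gammafactors}. Granted this, $\widetilde{B}_j$ must equal a unique scalar multiple of $B_{1-j}$, and I would verify that the scalars obtained from (\ref{FE_1}) and (\ref{FE_2}) coincide by applying the involution $(W,\chi)\mapsto (w_{3,1}\widetilde{W},\widetilde{\chi})$ to one equation and identifying the result with the other via $\widetilde{\widetilde{W}}=W$. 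Uniqueness of $\gamma_R$ is then immediate, since the co-Whittaker hypothesis on $\pi$ produces a test vector $W$ for which $\sum_r c_r^K(W,\chi;0)X^r$ is a non-zero-divisor in $S^{-1}(R[X,X^{-1}])$.

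The main obstacle is the rank-one uniqueness of equivariant bilinear pairings in the co-Whittaker setting. Over a field this rests on a Bruhat filtration argument on the mirabolic, but lifting it to admissible $R[G_3(K)]$-modules for a Noetherian $W(\overline{\mathbb{F}}_l)$-algebra $R$ requires the full force of Helm's theorem that a co-Whittaker module is determined up to a unique scalar by its Whittaker space, so that the vanishing of an equivariant form on a generic vector propagates to vanishing everywhere. Once this multiplicity-one statement is secured, the existence of $\gamma_R(X,\pi,\chi,\psi_K)$ satisfying both (\ref{FE_1}) and (\ref{FE_2}) is a formal consequence and no further analytic input is needed.
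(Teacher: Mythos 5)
The paper itself does not prove this theorem: it is quoted verbatim from \cite[Theorem 5.3]{Moss_Gammafactors}, as the sentence immediately preceding the statement makes explicit, so the relevant comparison is with Moss's actual proof rather than with anything in this article.

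Your sketch follows the classical Jacquet--Piatetski-Shapiro--Shalika template and, as you yourself flag, hinges on a multiplicity-one assertion: that the $S^{-1}(R[X,X^{-1}])$-module of pairings with the prescribed twisted quasi-invariance is free of rank one. Over a field this is the Bernstein--Zelevinsky uniqueness theorem for ${\rm GL}_n\times{\rm GL}_{n-1}$ bilinear forms, but your suggestion that it can be lifted to admissible $R[G_3(K)]$-modules over an arbitrary Noetherian $W(\overline{\mathbb{F}}_l)$-algebra $R$ via Helm's co-Whittaker structure theory is precisely where the gap sits. Helm's results give a free rank-one module of Whittaker functionals for a single co-Whittaker module; they do not, on their own, give rank-one uniqueness of Rankin--Selberg bilinear pairings over a general ring, and no such statement is established by the tools you cite. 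Moss's actual argument routes around this. A co-Whittaker $A[G_3(K)]$-module is a quotient of the universal co-Whittaker module $\mathfrak{W}$ over the integral Bernstein center $\mathfrak{Z}$, and $\mathfrak{Z}$ (by Helm's structure theorem) is reduced and $l$-torsion-free, hence embeds into a finite product of characteristic-zero fields. The functional equation is therefore proved first for $\mathfrak{W}\otimes\mathfrak{W}'$ by specializing to characteristic-zero points, where the classical JPSS functional equation applies, and then transported to an arbitrary pair $(\pi,\chi)$ by base change along $\mathfrak{Z}\to A$ and $\mathfrak{Z}'\to B$; the extraction of uniqueness from a test vector making the right-hand side of (\ref{FE_1}) a unit is the one place where your plan and Moss's coincide. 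In short, the step you correctly identify as ``the main obstacle'' is a genuine gap in the proposal, and the cited proof does not fill it the way you suggest --- it avoids the multiplicity-one problem over $R$ entirely by reduction to the integral Bernstein center and specialization.
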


\subsubsection{}
In \cite[Section 2]{moss_mod_l_nilpotent_gamma}, the author shows that the local converse theorem, which holds for generic representations defined over algebraically closed fields of characteristic zero, is no longer true for generic representations defined over $\overline{\mathbb{F}}_l$. To be more precise, let $\pi_1$ and $\pi_2$ be two $l$-modular generic representations for $G_n(K)$. Suppose we have the following equality of $\gamma$-factors
$$ \gamma_{\overline{\mathbb{F}}_l}
(X,\pi_1,\tau,\overline{\psi}_K) =
\gamma_{\overline{\mathbb{F}}_l}
(X,\pi_2,\tau,\overline{\psi}_K), $$ 
for all $\tau\in\mathcal{A}_{\overline{\mathbb{F}}_l}^{gen}(t)$, $1\leq t\leq n-1$. Then, it may happen that $\pi_1$ is not isomorphic to $\pi_2$. Moss (\cite[Theorem 1.2]{moss_mod_l_nilpotent_gamma}) proved that the local converse theorem can be recovered if we vary $\tau$ over $\mathcal{A}_{R,K}^{nil}(t)$ for all $t\in\{1,2,\dots,[\frac{n}{2}]\}$, and for all Artin local $\overline{\mathbb{F}}_l$-algebras $R$. We now state the theorem for $n=3$.
\begin{theorem}\label{converse_thm}
Let $\pi_1$ and $\pi_2$ be two $l$-modular generic representations of $G_3(K)$. Suppose
$$
\gamma_R(X,\pi_1,\eta,\overline{\psi}_K) = 
\gamma_R(X,\pi_2,\eta,\overline{\psi}_K),
$$ 
for all nilpotent lifts $\eta\in\mathcal{A}_{R,K}^{nil}(1)$ and for all Artin local $\overline{\mathbb{F}}_l$-algebras $R$. Then $\pi_1$ is isomorphic to $\pi_2$.
\end{theorem}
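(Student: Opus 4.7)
The plan is to specialize G. Moss's $l$-modular local converse theorem \cite[Theorem 1.2]{moss_mod_l_nilpotent_gamma} to $n = 3$. Since $[3/2]=1$, the general hypothesis, which involves twists by nilpotent lifts in $\mathcal{A}_{R,K}^{nil}(t)$ for $1\le t\le[n/2]$, collapses to twists by $\mathcal{A}_{R,K}^{nil}(1)$ alone, exactly matching the hypothesis above. Modulo checking that the gamma factors produced by Theorem \ref{func_equation} agree (up to the usual normalization) with those used in \cite{moss_mod_l_nilpotent_gamma}, the result is a direct specialization, but I sketch the main steps below for completeness.

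First, by uniqueness of Whittaker models for irreducible generic $\overline{\mathbb{F}}_l$-representations, the embedding $\pi_i \hookrightarrow {\rm Ind}_{N_3(K)}^{G_3(K)}(\Theta_K)$ via $\mathbb{W}(\pi_i,\psi_K)$ reduces the problem to showing $\mathbb{W}(\pi_1,\psi_K) = \mathbb{W}(\pi_2,\psi_K)$ as submodules. By the standard Bernstein-Zelevinsky mirabolic reduction, it is enough to establish equality after restricting Whittaker functions to the mirabolic subgroup $P_3(K)$; extension back to $G_3(K)$ is then forced by equality of central characters, which follows from the gamma-factor hypothesis applied to sufficiently unramified $\eta$.

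Next, applying both functional equations (\ref{FE_1}) and (\ref{FE_2}) to $\pi_1$ and $\pi_2$ and using $\gamma_R(X,\pi_1,\eta,\psi_K) = \gamma_R(X,\pi_2,\eta,\psi_K)$, one derives, for every Artin local $\overline{\mathbb{F}}_l$-algebra $R$, every nilpotent lift $\eta \in \mathcal{A}_{R,K}^{nil}(1)$, each $j \in \{0,1\}$, and appropriately matched Whittaker vectors $W_i \in \mathbb{W}(\pi_i,\psi_K)$, an identity
\begin{equation*}
\sum_{r\in\mathbb{Z}}\bigl[c_r^K(W_1,\eta;j) - c_r^K(W_2,\eta;j)\bigr]X^r = 0
\end{equation*}
in $S^{-1}(R[X,X^{-1}])$. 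A Mellin-type inversion, applied to this family of vanishing Rankin-Selberg series as $\eta$ varies over $\mathcal{A}_{R,K}^{nil}(1)$ and $R$ over all Artin local $\overline{\mathbb{F}}_l$-algebras, then forces $W_1 = W_2$ upon restriction to $P_3(K)$.

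The principal obstacle is precisely this Mellin inversion step in positive characteristic: smooth characters of $K^\times$ with values in $\overline{\mathbb{F}}_l$ are too few to separate Whittaker functions, and this is exactly what causes the naive converse theorem to fail over $\overline{\mathbb{F}}_l$. The resolution, due to Moss, is to extract pointwise information about a Whittaker function through pairings against the enlarged family $\bigcup_R \mathcal{A}_{R,K}^{nil}(1)$; the deformation-theoretic framework for co-Whittaker modules developed in \cite{Helm_Co-whittaker}, together with the compatibility of the gamma factor of Theorem \ref{func_equation} with base change along $R \to R'$, are what make this inversion effective. Once Mellin inversion is available in this enlarged sense, the classical $G_3$ argument of Jacquet-Piatetski-Shapiro-Shalika concludes the proof and yields $\pi_1 \simeq \pi_2$.
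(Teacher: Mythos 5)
Your proposal is correct and matches the paper's approach: the paper states Theorem \ref{converse_thm} as a direct specialization of Moss's $l$-modular converse theorem \cite[Theorem 1.2]{moss_mod_l_nilpotent_gamma} to $n=3$ (so $[n/2]=1$), with no independent proof given beyond the citation. Your additional sketch of the internals of Moss's argument (Whittaker model reduction, Mellin-type inversion over the enlarged family of Artin local coefficient rings) is supplementary and consistent with Moss, but not required by the paper; your caveat about matching the gamma-factor normalizations of Theorem \ref{func_equation} with Moss's is a reasonable check the paper silently assumes.
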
 

\subsection{Frobenius twist}\label{Frob_twist}
Let $G$ be a locally profinite group. Let $(\rho,V)$ be an $l$-modular representation of $G$. Let ${\rm Frob}$ be the Frobenius automorphism of $\overline{\mathbb{F}}_l$. The {\it Frobenius twist} of $\rho$, denoted by $\rho^{(l)}$, is defined as an $l$-modular representation of $G$ on the space $V \otimes_{\rm Frob}\overline{\mathbb{F}}_l$, where the underlying additive group structure of the vector space $V \otimes_{\rm Frob}\overline{\mathbb{F}}_l$ is same as that of $V$ but the scalar action $*$ on $V \otimes_{\rm Frob}\overline{\mathbb{F}}_l$ is given by
\begin{center}
$\lambda*v = \lambda^{\frac{1}{l}}v$, for $\lambda\in\overline{\mathbb{F}}_l$, $v\in V$.
\end{center}
Note that the action of $G$ on $V$ induces the representation $(\rho^{(l)}, V \otimes_{\rm Frob}\overline{\mathbb{F}}_l)$. 
We conclude this section with the following lemma, which will be used in the proof of the main result. 
\begin{lemma}\label{lemma_frob}
Let $R$ be an Artin local $\overline{\mathbb{F}}_l$-algebra, and let $\eta$ be an $R$-valued character of $K^\times$. Let $(\pi,V)$ be an $l$-modular generic representation of $G_3(K)$. Then we have
$$
\gamma_R(X,\pi,\eta,\overline{\psi}_K)^l=\gamma_R(X^l,\pi^{(l)},
\eta^l,\overline{\psi}_K^l).
$$
\end{lemma}
\begin{proof}
Let $\mathcal{W}_\pi$ be a Whittaker functional on the representation
$\pi$ and let $\mathcal{W}_{\pi^{(l)}}$ be the composite map
$$
V\xrightarrow{\mathcal{W}_\pi}\overline{\mathbb{F}}_l
\xrightarrow{x \mapsto x^l}\overline{\mathbb{F}}_l.
$$ 
Then $\mathcal{W}_{\pi^{(l)}}$ is a Whittaker functional on the representation $\pi^{(l)}$ with respect to the character $\overline{\psi}_K^l$, and the Whittaker space $\mathbb{W}(\pi^{(l)},\overline{\psi}_K^l)$ consists of the functions $W_v^l$, where $W_v$ varies in $\mathbb{W}(\pi,\overline{\psi}_K)$. Using functional equations (\ref{FE_1}) and (\ref{FE_2}), we have
\begin{equation}\label{CM_1}
\sum_{r\in\mathbb{Z}}c^K_r(\widetilde{W_v},\eta^{-1};1-j)
^lq_K^{lr}X^{-lr} =
\gamma_R(X,\pi,\eta,\overline{\psi}_K)^l\sum_{r \in\mathbb{Z}}c^K_r(W_v,\eta;j)^lX^{lr}
\end{equation}	
and 
\begin{equation}\label{CM_2}
\sum_{r\in\mathbb{Z}}c^K_r(\widetilde{W_v^l},\eta^{-l};1-j)
q_K^rX^{-r}=\gamma_R(X,\pi^{(l)},\eta^l,\overline{\psi}_K^l)
\sum_{r\in \mathbb{Z}}c^K_r(W_v^l,\eta^l;j)X^r,
\end{equation}
for each $j\in\{0,1\}$. Replacing $X$ by $X^l$ to the equation (\ref{CM_2}), we get
$$
\sum_{r\in\mathbb{Z}}c^K_r(\widetilde{W_v^l},\eta^{-l};1-j)
q_K^rX^{-lr} = \gamma_R(X^l,\pi^{(l)},\eta^l,\overline{\psi}_K^l)
\sum_{r\in \mathbb{Z}}c^K_r(W_v^l,\eta^l;j)X^{lr}.
$$
Finally, it follows from the above equation and the equality (\ref{CM_1}) that
$$ 
\gamma_R(X,\pi,\eta,\overline{\psi}_K)^l = 
\gamma_R(X^l,\pi^{(l)},\eta^l,\overline{\psi}_K^l). 
$$
\end{proof}	

\section{Representations of Weil-Deligne group}
In this section, we review the theory of Weil--Deligne representations over arbitrary Noetherian $\Lambda$-algebras and the associated Deligne--Langlands $\gamma$-factors. For precise reference, see \cite{helm2015deligne}.
\subsection{}
We fix an algebraic closure $K_s$ of $K$. Let $\mathcal{W}_K\subseteq{\rm Gal}(K_s/K)$ be the Weil group of $K$. For any finite seperable extension $L$ of $K$ contained in $K_s$, the Weil group $\mathcal{W}_L$ is considered as a subgroup of $\mathcal{W}_K$. Let $R$ be a Noetherian $\Lambda$-algebra.
\subsubsection{}
For each non-negetive integer $m$, we denote by $R_m$ the kernel of the canonical map ${\rm GL}_n(R)\rightarrow {\rm GL}_n(R/l^mR)$. A representation $\rho:\mathcal{W}_K\rightarrow {\rm GL}_n(R)$ is called {\it $l$-adically continuous} if for all $m$, the preimage set $$\rho^{-1}(R_m)=\{x\in\mathcal{W}_K:\rho(x)\in R_m\}$$ is open in $\mathcal{W}_K$. In particular, any smooth $R[\mathcal{W}_K]$ module $\rho$ is $l$-adically continuous. We denote by $\mathcal{G}_{\overline{\mathbb{F}}_l,K}^{ss}(n)$ the set of isomorphism classes of $n$-dimensional, smooth, semisimple $l$-modular representations of $\mathcal{W}_K$.
\subsubsection{Nilpotent lifts}\label{nil_Weil}
Let $R$ be an Artin local $\overline{\mathbb{F}}_l$-algebra. Let $\rho$ be a smooth, $n$-dimensional, semisimple $l$-modular representation of $\mathcal{W}_K$. A {\it nilpotent lift} of $\rho$ to $R$ is a smooth $R[\mathcal{W}_K]$ module $\widetilde{\rho}$ such that $\widetilde{\rho}$ is a free $R$-module of rank $n$ and there exists an isomorphism
$$
\widetilde{\rho}\otimes_R\overline{\mathbb{F}}_l
\simeq\rho.
$$
For any Artin local $\overline{\mathbb{F}}_l$-algebra $R$, we denote by $\mathcal{G}_{R,K}(n)$ the set of isomorphism classes of nilpotent lifts of objects in $\mathcal{G}_{\overline{\mathbb{F}}_l,K}^{ss}(n)$.
\subsection{Deligne-Langlands gamma factors in families (\cite[Theorem 1.1]{helm2015deligne})}
Let $R$ be a Noetherian $\Lambda$-algebra. Let $S$ be the multiplicative subset of $R[X,X^{-1}]$  consisting of Laurent polynomials whose first and last coefficients are units. Then, for any $l$-adically continuous representation $\rho:\mathcal{W}_K\rightarrow {\rm GL}_n(R)$, there exists a unique element $\gamma_R(X,\rho,\psi_K)\in S^{-1}(R[X,X^{-1}])$ with the following properties:
\begin{enumerate}
\item For any Noetherian $\Lambda$-algebra $R'$ and a morphism $f:R\rightarrow R'$, we have
$$ 
f(\gamma_R(X,\rho,\psi_K))=\gamma_{R'}(X,\rho\otimes_R R',\psi_K). 
$$
\item In particular, if $R$ is the field of fractions of a complete local domain of characteristic zero with residue field $\overline{\mathbb{F}}_l$, then $\gamma_R(X,\rho,\psi_K)$ coincides with the classical Deligne--Langlands $\gamma$-factor.
\end{enumerate}
Moreover, the association $(R,\rho,\psi_K)\longmapsto\gamma_R(X,\rho,\psi_K)$ satisfies:
\begin{enumerate}
\item For any exact sequence $0\rightarrow\rho_1\rightarrow\rho_2\rightarrow\rho_3
\rightarrow 0$ of representations of $\mathcal{W}_K$, 
we have
$$ 
\gamma_R(X,\rho_2,\psi_K) = \gamma_R(X,\rho_1,\psi_K)\,
\gamma_R (X,\rho_3,\psi_K). 
$$
\item If $K_1$ is a finite seperable extension of $K$ and $\delta_{K_1}$ is a virtual smooth representation of $\mathcal{W}_{K_1}$ of virtual degree zero, then
$$
\gamma_R\big(X,\delta_{K_1},\psi_K\circ{\rm Tr}_{K_1/K}\big)=\gamma_R\big(X,{\rm ind}_{\mathcal{W}_{K_1}}^{\mathcal{W}_K}(\delta_{K_1}),
\psi_K\big).
$$
\end{enumerate}

\section{Local Langlands Correspondence}
\subsection{The mod-$l$ local Langlands correspondence}\label{mod-l_LLC}
In this subsection, we recall $l$-adic and mod-$l$ local langlands correspondence. We fix a square root of $q_F$, say $q_F^{1/2}$, in
$\overline{\mathbb{Q}}_l$. The choice of $q_F^{1/2}$ and the twisting by the unramified character $\nu_F^{(1-n)/2}$ is required for transferring the local Langlands correspondence over $\mathbb{C}$ to a local Langlands correspondence over $\overline{\mathbb{Q}}_l$ (see \cite[Section 7]{henniart_bordeaux} and for $n=2$, see \cite[Section 35.1, Chapter 8]{BH_LLC_GL2} for a reference). To be more precise, for each $n\geq 1$, there is a bijective correspondence
$$
L_{\overline{\mathbb{Q}}_l,n}^{K,0}:
\mathcal{G}_{\overline{\mathbb{Q}}_l,K}^0(n)
\longrightarrow\mathcal{A}_{\overline{\mathbb{Q}}_l,K}^0(n),
$$ 
and it induces a bijection
$L^K_{\overline{\mathbb{Q}}_l,n}:
\mathcal{G}_{\overline{\mathbb{Q}}_l,K}^{ss}(n)
\rightarrow\mathcal{A}_{\overline{\mathbb{Q}}_l,K}^{gen}(n)$. In the seminal work \cite[Theorem 1.6]{Vigneras_semisimple}, Vign\'eras proved that there exists a bijection
$$
L^{K,0}_{\overline{\mathbb{F}}_l,n}:
\mathcal{G}_{\overline{\mathbb{F}}_l,K}^0(n)
\longrightarrow\mathcal{A}_{\overline
{\mathbb{F}}_l,K}^0(n),
$$
for each $n\geq 1$, which extends to a one-to-one correspondence
$L_{\overline{\mathbb{F}}_l,n}^K:
\mathcal{G}_{\overline{\mathbb{F}}_l,K}^{ss}(n)
\longrightarrow\mathcal{A}_{\overline
{\mathbb{F}}_l,K}^{gen}(n)$. Moreover, the sequence $\big\{L_{\overline{\mathbb{F}}_l,n}^K\big\}_{n\geq 1}$ is compatible with $\big\{L_{\overline{\mathbb{Q}}_l,n}^{K,0}\big\}_{n\geq 1}$ under the reduction mod-$l$. 
\subsubsection{Local Langlands correspondence on nilpotent lifts}\label{LLC_nilpoent}
In the article \cite[Theorem 1.4]{moss_mod_l_nilpotent_gamma}, Moss gave a characterization of the sequence of bijections $\big\{L_{\overline{\mathbb{F}}_l,n}^K\big\}_{n\geq 1}$ via $\gamma$-factors by extending $\big\{L_{\overline{\mathbb{F}}_l,n}^K\big\}_{n\geq 1}$ to a correspondence on nilpotent lifts. 
\begin{theorem}\label{char_modl_thm}
For each $n\geq 1$, there is a unique bijection $$
L_{\overline{\mathbb{F}}_l,n}^K:
\mathcal{G}_{\overline{\mathbb{F}}_l,K}^{ss}(n)
\longrightarrow\mathcal{A}_{\overline{\mathbb{F}}_l,K}^{gen}(n),
$$
such that for every Artin-local $\overline{\mathbb{F}}_l$-algebra $R$, it admits an extension to a sequence of maps 
$$
L^K_{R,n}:\mathcal{G}_{R,K}(n)\longrightarrow
\mathcal{A}_{R,K}^{nil}(n),
$$ 
with the property that for all $n>m$, $\rho\in\mathcal{G}_{R,K}(n)$ and $\rho'\in\mathcal{G}_{R',K}(m)$, we have 
\begin{equation}\label{LLC}
\gamma\big(X,L_{R,n}^K(\rho),L^K_{R',m}(\rho'),\psi\big)
= \gamma(X,\rho\otimes\rho',\psi)
\end{equation} 
in $(R\otimes_{\overline{\mathbb{F}}_l}R')[[X]][X^{-1}]$. In particular, the map $L^K_{R,1}$ is given by the local class field theory and it is a bijection.
\end{theorem}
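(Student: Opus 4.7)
The plan is to combine three ingredients: Vigneras's existing mod-$l$ local Langlands bijection on the semisimple side, Helm's Galois-side gamma factors in families (recalled in Section $3$), and the local converse theorem (Theorem \ref{converse_thm}) which supplies uniqueness. I would proceed by induction on $n$.

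For the base case $n=1$, both $\mathcal{G}_{R,K}(1)$ and $\mathcal{A}_{R,K}^{nil}(1)$ are naturally identified with the set of smooth $R^\times$-valued characters of $K^\times$: on the Galois side via local class field theory, and on the automorphic side because every nilpotent lift of a character is a character. So $L^K_{R,1}$ is declared to be this tautological bijection, and the condition (\ref{LLC}) is vacuous since it is required only for $n>m$. Now suppose by induction that the sequence $\{L^K_{R',m}\}_{m<n}$ has been constructed for all Artin local $\overline{\mathbb{F}}_l$-algebras $R'$ and satisfies (\ref{LLC}). Given $\rho \in \mathcal{G}_{R,K}(n)$, let $\bar\rho$ denote its reduction modulo the maximal ideal of $R$ and set $\bar\pi = L^K_{\overline{\mathbb{F}}_l,n}(\bar\rho)$ using Vigneras's bijection. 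I would define $L^K_{R,n}(\rho)$ to be a nilpotent lift $\tilde\pi$ of $\bar\pi$ whose Rankin--Selberg gamma factors (Theorem \ref{func_equation}), when paired with $L^K_{R',m}(\rho')$ for any $m<n$ and any $\rho'\in\mathcal{G}_{R',K}(m)$, agree with Helm's Galois-side gamma factors $\gamma_R(X,\rho\otimes\rho',\psi_K)$.

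The existence of such a $\tilde\pi$ is the main substantive step. Here I would use the deformation-theoretic description of nilpotent lifts of $\bar\pi$ provided by the integral Bernstein center and the theory of co-Whittaker modules, together with the compatibility of gamma factors under reduction mod $l$: one lifts $\rho$ to a family over a characteristic-zero local ring, transports via the classical $l$-adic LLC $L^{K}_{\overline{\mathbb{Q}}_l,n}$, and then reduces to obtain the sought-after nilpotent lift; the prescribed gamma factor identity (\ref{LLC}) then holds by construction from the analogous identity in characteristic zero. Uniqueness of $\tilde\pi$ as a nilpotent lift of $\bar\pi$ follows from the local converse theorem (Theorem \ref{converse_thm}): any two candidates for $L^K_{R,n}(\rho)$ are nilpotent lifts of the same irreducible generic $\overline{\mathbb{F}}_l$-representation with identical gamma factors against all nilpotent lifts of generic representations of $G_t(K)$ for $t\leq [n/2]$, and the converse theorem forces them to be isomorphic. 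Bijectivity of $L^K_{R,n}$ then follows by running the same argument from the automorphic side back to the Galois side, using the analogous characterization of Galois representations by their gamma factors.

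The hard part will be the existence step: realizing a prescribed Galois-side gamma factor by an actual nilpotent automorphic lift. A priori the gamma factor lives in $S^{-1}(R[X,X^{-1}])$ and matching it requires that the natural map from nilpotent lifts of $\bar\pi$ to gamma-factor data is surjective onto the locus traced out by nilpotent Galois lifts of $\bar\rho$. This surjectivity rests on the precise structure of co-Whittaker modules in families and the fact that the integral Bernstein center acts transitively on the relevant deformation space; equivalently, that a nilpotent lift is pinned down by its image under the gamma-factor morphism composed with the inductively available $\{L^K_{R',m}\}_{m<n}$. A secondary subtlety is bootstrapping the induction: the application of Theorem \ref{converse_thm} at step $n$ requires the inductive hypothesis that $L^K_{R',m}$ is already surjective onto $\mathcal{A}_{R',K}^{nil}(m)$ for $m\leq [n/2]$, so the existence and uniqueness steps must be interleaved carefully across the induction.
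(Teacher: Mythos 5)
The paper does not prove Theorem \ref{char_modl_thm}: it is recorded verbatim as a citation of Moss's result \cite[Theorem 1.4]{moss_mod_l_nilpotent_gamma}, so there is no ``paper's own proof'' to compare against. Your sketch does roughly track Moss's actual strategy (base case via class field theory; build on Vign\'eras's mod-$l$ bijection; use Helm's Galois-side gamma factors and the Rankin--Selberg gamma factors of Theorem \ref{func_equation}; get uniqueness from a converse theorem), so as an outline it is pointed in the right direction. But two of the steps you lean on are not actually available in the form you invoke them, and you yourself flag the third (existence) as unfilled.

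First, your uniqueness step appeals to Theorem \ref{converse_thm} to conclude that two candidate nilpotent lifts $\tilde\pi$, $\tilde\pi'$ of $\bar\pi$ with matching gamma data are isomorphic. But Theorem \ref{converse_thm}, as stated in the paper, distinguishes only generic $\overline{\mathbb{F}}_l$-representations of $G_3(K)$ --- it is a statement about residual representations, not about their nilpotent lifts over $R$, and it is special to $n=3$. What you actually need is the stronger converse theorem that separates points of $\mathcal{A}_{R,K}^{nil}(n)$ themselves (i.e.\ two nilpotent lifts of the \emph{same} $\overline{\mathbb{F}}_l$-representation with equal gamma factors against all $\mathcal{A}_{R',K}^{nil}(m)$, $m\leq \lfloor n/2\rfloor$, are isomorphic as $R[G_n(K)]$-modules). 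That refinement is a separate ingredient in Moss's work; nothing recalled in this paper gives it to you, and it does not follow from Theorem \ref{converse_thm} by reduction mod the maximal ideal. Without it, the ``uniqueness of $\tilde\pi$'' step collapses: both candidates reduce to $\bar\pi$ by construction, so the residual converse theorem tells you nothing new.

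Second, for the existence step you propose lifting $\rho$ to a characteristic-zero family, applying the $l$-adic LLC, and reducing. The difficulty is that not every nilpotent lift $\rho$ over an Artin local $\overline{\mathbb{F}}_l$-algebra is the reduction of a characteristic-zero point, and the automorphic object you get by this route is a priori a co-Whittaker $R[G_n(K)]$-module, not obviously an element of $\mathcal{A}_{R,K}^{nil}(n)$ (an $R$-submodule of the induced space reducing to the Whittaker model of $\bar\pi$). Bridging these --- showing that a co-Whittaker module over $R$ with the right residual support embeds as a nilpotent lift, and that the resulting gamma factor matches $\gamma_R(X,\rho\otimes\rho',\psi_K)$ termwise in $S^{-1}(R[X,X^{-1}])$ rather than just after reduction --- is precisely what requires the Bernstein-center machinery and the interpolation properties of Helm's and Moss's gamma factors. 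You correctly identify this as ``the hard part,'' but the proposal leaves it as a hope rather than an argument. Finally, the remark at the end about bijectivity of $L^K_{R,n}$ for $n>1$ is not needed and is in fact not asserted by the theorem (only the $\overline{\mathbb{F}}_l$-level map is a bijection, and $L^K_{R,1}$ is bijective by class field theory); so that paragraph is best dropped.
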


\subsection{Local base change for cyclic extension}\label{local_base_change}
Let $F$ be a finite extension of $\mathbb{Q}_p$, and 
let $E$ be a Galois extension of $F$ of prime degree $l$, where $l\ne p$. The base change lifting on irreducible smooth representations of $G_n(F)$ over complex vector spaces is characterized by
certain character identities (\cite[Chapter 3]{Arthur_Clozel_BC}). In this subsection, we recall the
relation between the local Langlands correspondence and the local base change lifting of $l$-adic generic representations of $G_n(F)$ to $G_n(E)$. 
\subsubsection{}
Let $\pi_F$ be an $l$-adic generic representation of $G_n(F)$. Let $\pi_E$ be the $l$-adic generic representation of $G_n(E)$ such that
\begin{equation}\label{base_change_def}
{\rm res}_{\mathcal{W}_E}
((L_{\overline{\mathbb{Q}}_l,n}^F)^{-1}
(\pi_F))\simeq (L_{\overline
	{\mathbb{Q}}_l,n}^E)^{-1}(\pi_E).
\end{equation}
The representation $\pi_E$ is called  the base change lifting of $\pi_F$. In this case, $\pi_E$ is isomorphic to $\pi_E^\gamma$, for all $\gamma\in{\rm Gal}(E/F)$. Moreover, we have the following lemma about the base change lifting and integrality.
\begin{lemma}\label{bc_integral}
Let $\pi_F$ be an $l$-adic generic representation of $G_n(F)$, and let $\pi_E$ be the base change lifting of 
$\pi_F$ to $G_n(E)$. Then $\pi_F$ is integral if and only if the base change lifting $\pi_E$ is integral.
\end{lemma}
\begin{proof}
Let $\pi$ be an irreducible, smooth, $l$-adic
representation of $G_n(K)$. Let ${\rm scs}(\pi)$
be the supercuspidal support of $\pi$ (see
\cite[III.3.]{Vigneras_Induced} for the definition). The
representation $\pi$ is integral
if and only if ${\rm scs}(\pi)$ is integral (see \cite[Section 1.4]{Vigneras_semisimple}, and for
general reductive groups, see \cite[Corollary
1.6]{dat2024finiteness} for a reference).
The representation ${\rm scs}(\pi)$ is integral if and only if
the central character of ${\rm scs}(\pi)$ is
integral. 
	
Let us first assume that $\pi_F$ is integral. Let $\rho_F$ be the semisimple, $l$-adic representation of $\mathcal{W}_F$ associated with $\pi_F$ under the
local Langlands correspondence (LLC) $L^F_{\overline{\mathbb{Q}}_l,n}$. Under LLC,
the representation ${\rm scs}(\pi_F)$ corresponds to the
$\mathcal{W}_F$-representation $\rho_F$.  Since the determinant character of each irreducible
component of $\rho_F$ is integral, we get that $\rho_F$ is
integral. This implies that the restriction
${\rm res}_{\mathcal{W}_E}(\rho_F)$ is also integral. Under 
LLC, the supercuspidal support of $\pi_E$ corresponds to the restriction
${\rm res}_{\mathcal{W}_E}(\rho_F)$. 
Thus, the supercuspidal support of $\pi_E$ is integral
and we get that 
$\pi_E$ is integral.
	
Conversely, assume that $\pi_E$ is integral. Let
$\rho_E$ be the semisimple, $l$-adic
representation of $\mathcal{W}_E$ associated with $\pi_E$ under
LLC. Since the representation ${\rm scs}(\pi_E)$ is integral, the
$\mathcal{W}_E$-representation $\rho_E$ (which is
${\rm res}_{\mathcal{W}_E}(\rho_F)$) is integral. Let $\mathcal{L}$
be a $\mathcal{W}_E$-invariant lattice in $\rho_E$. Then,
$$ \sum_{x\in \mathcal{W}_F/\mathcal{W}_E} \rho_F(x)\mathcal{L}. $$ 
is a $\mathcal{W}_F$-invariant lattice in $\rho_F$. Thus, the $l$-adic representation $\rho_F$ is also integral, which implies that the
supercuspidal support ${\rm scs}(\pi_F)$ is integral. Hence, $\pi_F$ is integral.
\end{proof}

\begin{remark}\label{mod_l_bc_form}
\rm Let $\pi_F$ be an integral, $l$-adic generic representation of $G_n(F)$, and let $\pi_E$ be the base change lifting of $\pi_F$ to $G_n(E)$. Then $\pi_E$ is generic and is also integral by Lemma \ref{bc_integral}. Using the isomorphism (\ref{base_change_def}), we get 
\begin{equation}\label{base_change_mod_l}
{\rm res}_{\mathcal{W}_E}\big(r_l((L_{
\overline{\mathbb{Q}}_l,n}^F)^{-1} (\pi_F))\big)\simeq r_l\big((L_{\overline{\mathbb{Q}}_l,n}^E)^{-1}
(\pi_E)\big)
\end{equation} 
Let $\big\{\tau_F^1,\tau_F^2,\dots,\tau_F^r\big\}$ and $\big\{\tau_E^1,\tau_E^2,\dots,\tau_E^s\big\}$ be the supercuspidal supports of $J_l(\pi_F)$ and $J_l(\pi_E)$ respectively, where $1\leq r,s\leq n$. 
Let $\rho_F^i$ (resp. $\rho_E^j$) be the semisimple, $l$-modular representation of $\mathcal{W}_F$ (resp. $\mathcal{W}_E$) associated with $\tau_F^i$ (resp. $\tau_F^j$) under the mod-$l$ local Langlands correspondence $L^F_{\overline{\mathbb{F}}_l,n}$ (resp. $L^E_{\overline{\mathbb{F}}_l,n}$). Then, it follows from (\ref{base_change_mod_l}) that
$$
{\rm res}_{\mathcal{W}_E}(\bigoplus_{i=1}^r \rho_F^i)\simeq 
\bigoplus_{j=1}^s \rho_E^j.
$$
\end{remark}

\section{Tate cohomology}
Let $X$ be an $l$-space (i.e., locally compact and totally disconnected) equipped with an action of a cyclic group $\langle\gamma\rangle$ of finite order. In this section, we recall the Tate cohomology groups of $\gamma$-equivariant sheaves of $\Lambda$-modules on $X$. For a reference, see \cite[Section 3]{Venkatesh-Treumann}. Then we put up some initial results, which will be needed later. 

\subsection{}
Let $X$ be an $l$-space equipped with an action of a finite cyclic group $\langle\gamma\rangle$ of order $l$. For an $l$-sheaf of $\Lambda$-modules  $\mathcal{F}$ on $X$, we denote by $\Gamma_c(X;\mathcal{F})$ the space of compactly supported sections of $\mathcal{F}$. In particular, if $\mathcal{F}$ is the constant sheaf with stalk $\Lambda$, then
$\Gamma_c(X;\mathcal{F})=C_c^\infty(X;\Lambda)$. In general, if $\mathcal{F}$ is $\langle\gamma\rangle$-equivariant, then $\gamma$ induces a map of restricted sheaves $\mathcal{F}|_{X^\gamma} \rightarrow \mathcal{F}|_{X^\gamma}$, and the Tate cohomology groups are defined as
\begin{center}
$\widehat{H}^0(\mathcal{F}|_{X^\gamma}) := 
\dfrac{{\rm Ker}(1-\gamma)}{{\rm Img}(N)}$ and
$\widehat{H}^1(\mathcal{F}|_{X^\gamma}) := 
\dfrac{{\rm Ker}(N)}{{\rm Img}(1-\gamma)}$,
\end{center}
where $N = 1+\gamma +\dots +\gamma^{l-1}$, viewed as an element of the group ring $\Lambda[\langle\gamma\rangle]$. A compactly supported section of $\mathcal{F}$ can be restricted to a
compactly supported section of $\mathcal{F}|_{X^\gamma}$, and we have the following result:
\begin{proposition}\cite[Proposition 3.3]{Venkatesh-Treumann}\,\label{TV_isom}
For each $i\in\{0,1\}$, the restriction map induces an isomorphism 
\begin{center}
$\widehat{H}^i(\Gamma_c(X; \mathcal{F}) \xrightarrow{\sim} \Gamma_c(X^\gamma; \widehat{H}^i(\mathcal{F}))$.
\end{center}
\end{proposition}

\subsection{Comparison of integrals}
Let $R$ be a local Artinian $\overline{\mathbb{F}}_l$-algebra, and let $\mathfrak{m}_R$ be the unique maximal ideal of $R$. Let $X$ be a non-empty set, equipped with an action of the cyclic group $\Gamma=\langle\gamma\rangle$. The space $C_c^\infty(X;R)$ of all locally constant and compactly supported $R$-valued functions on $X$ has a natural action of $\Gamma$, defined by
$$ (\gamma . f)(x) := f(\gamma^{-1}x), $$
for all $x\in X$ and $f \in C_c^\infty(X;R)$. Let $C_c^\infty(X;R)^\Gamma$ be the space of all $\Gamma$-invariant functions in $C_c^\infty(X;R)$. For two positive integers $r$ and $s$, let $Y_F$ and $Y_E$ denote the ring of $r\times s$ matrices with entries in $F$ and $E$ respectively. Recall that $X_K$ denotes the coset space $N_n(K)\setminus G_n(K)$. Then we have the following result:
\begin{proposition}\label{com_int_1}
Let $d\mu_E$ (resp. $d\nu_E$) be the Haar measure on $X_E$ (resp. $Y_E$), and let $d\mu_F$ (resp. $d\nu_F$) be the Haar measure on $X_F$ (resp. $Y_F$). Then, there exists $c\in R^\times$ such that
$$
\int_{Y_E}\int_{X_E}\varphi\,d\mu_E\,d\nu_E = c\int_{Y_F}\int_{X_F}\varphi\,d\mu_F\,d\nu_F,
$$
for all $\varphi\in C_c^\infty(Y_E\times X_E;R)^\Gamma$.
\end{proposition}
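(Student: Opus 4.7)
\emph{Sketch of approach.} The argument follows the blueprint of Proposition~5.3 in \cite{dhar2022tate}, augmented to accommodate the extra $Y_E$-factor by Fubini. The decisive observation is that $R$ is an Artin local $\overline{\mathbb{F}}_l$-algebra, so $l = 0$ in $R$. Any free $\Gamma$-orbit (which has cardinality $l$) therefore contributes zero to the integral of a $\Gamma$-invariant function, and both integration functionals reduce to integrals over the $\Gamma$-fixed loci of their domains.

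First, I would identify the fixed locus of $\Gamma$ on $Y_E \times X_E$. The componentwise action on the matrix space $Y_E$ gives $Y_E^\Gamma = Y_F$. For $X_E = N_n(E) \setminus G_n(E)$, the condition that a coset $N_n(E) g$ be $\Gamma$-fixed produces a $1$-cocycle $\gamma \mapsto \gamma(g) g^{-1}$ valued in $N_n(E)$; since $N_n$ is an iterated extension of the additive group and $H^1(\Gamma, \mathbb{G}_a(E)) = 0$ by the normal basis theorem, this cocycle is a coboundary, and the coset has a representative in $G_n(F)$. A direct check shows the natural map $X_F \to X_E^\Gamma$ is bijective, yielding $(Y_E \times X_E)^\Gamma = Y_F \times X_F$.

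Second, I would apply Proposition~\ref{TV_isom} to the constant sheaf $R$ on $Y_E \times X_E$, obtaining an isomorphism
$$\widehat{H}^0\bigl(C_c^\infty(Y_E \times X_E; R)\bigr) \xrightarrow{\sim} C_c^\infty(Y_F \times X_F; R)$$
that sends the class of a $\Gamma$-invariant $\phi$ to $\phi|_{Y_F \times X_F}$. The double $E$-integral is a $\Gamma$-equivariant linear functional on the source (with trivial action on $R$), and its restriction to $\Gamma$-invariants annihilates the norm image because $l = 0$ in $R$, so it factors through $\widehat{H}^0$ to an $R$-linear functional on $C_c^\infty(Y_F \times X_F; R)$. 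This functional is translation-invariant under $(Y_F, +)$ and right-$G_n(F)$-invariant, so by uniqueness of Haar measure it equals $c$ times the $F$-double integral for some $c \in R$; evaluating both sides on the characteristic function of a sufficiently small $\Gamma$-stable compact open subset meeting the fixed locus expresses $c$ as a ratio of Haar volumes, which is a unit in $R$.

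The main obstacle is this final units check: one must choose the test neighborhood carefully so that the $E$- and $F$-volumes are simultaneously computable and their ratio is patently not divisible by $l$. Once this local computation is secured, the rest of the argument is formal.
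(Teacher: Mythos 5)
Your proposal is correct and follows essentially the same strategy as the paper: identify the $\Gamma$-fixed locus via vanishing of $H^1(\Gamma, N_n(E))$, observe that $l = 0$ in $R$ kills the contribution of free $\Gamma$-orbits, transfer the $E$-integration functional down to $C_c^\infty(Y_F \times X_F; R)$, and pin down the constant $c$ by translation-invariance plus a unit check. The one presentational difference is that you invoke Proposition~\ref{TV_isom} and the norm-vanishing $\int_E N\phi = l\int_E\phi = 0$ directly to factor the $E$-integral through $\widehat{H}^0$, whereas the paper re-derives the needed surjection $C_c^\infty(Y_E\times X_E;R)^\Gamma \twoheadrightarrow C_c^\infty(Y_F\times X_F;R)$ by hand from the restriction short exact sequence and a fundamental-domain decomposition of the complement; your route is a bit slicker and buys a shorter argument at the cost of leaning on \ref{TV_isom} as a black box. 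The delicate part, as you correctly flag, is the units check on $c$: you sketch evaluating on a characteristic function of a $\Gamma$-stable compact open meeting the fixed locus, but the paper handles this more concretely by pushing the characteristic function $1_{M_{r\times s}(\mathfrak{o}_E)\times I}$ (for a $\Gamma$-invariant compact open subgroup $I$ of $G_n(E)$) through the surjection $\Psi\colon C_c^\infty(Y_E\times G_n(E);\overline{\mathbb{F}}_l)\to C_c^\infty(Y_E\times X_E;\overline{\mathbb{F}}_l)$ given by integrating over $N_n(E)$, and checking directly that the image is non-zero mod $\mathfrak{m}$; this construction is what actually produces a usable $\Gamma$-invariant test function on the quotient space $X_E$, and your sketch would need to supply something equivalent to be complete.
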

\begin{proof}
Since $N_n(E)$ is stable under the action of $\Gamma$ on
$G_n(E)$, we have the following long exact sequence of
non-abelian cohomology (\cite[Chapter VII, Appendix]{Serre-local-field}):
$$
0\longrightarrow N_n(E)^\Gamma\longrightarrow G_n(E)^\Gamma\longrightarrow X_E^\Gamma\longrightarrow H^1\big(\Gamma; N_n(E)\big)\longrightarrow H^1\big(\Gamma; G_n(E)\big).
$$
Since the pointed set $H^1\big(\Gamma,N_n(E)\big)$ is trivial, the above long exact sequence gives the equality between the sets $X_E^\Gamma$ and $X_F$.
Note that $X_F$ (resp. $Y_F$) is closed in $X_E$ (resp. $Y_E$). Then, we have the following exact sequence of $\Gamma$-modules:
\begin{equation}\label{s}
0\longrightarrow C_c^\infty\big((Y_E\times X_E)\setminus(Y_F\times X_F);R\big)\longrightarrow C_c^\infty(Y_E\times X_E;R)\longrightarrow C_c^\infty(Y_F\times X_F;R)\longrightarrow 0.
\end{equation}
Since the action of $\Gamma$ on the space $(Y_E\times X_E)\setminus(Y_F\times X_F)$ is free, there exists a fundamental domain $\mathcal{U}$ such that 
$$
(Y_E\times X_E)\setminus(Y_F\times X_F) = 
\bigsqcup_{i=0}^{l-1}\gamma^i\mathcal{U},
$$
and this implies that
$$
H^1\big(\Gamma,C_c^\infty\big((Y_E\times X_E)\setminus(Y_F\times X_F);R\big)\big) = 0.	
$$
Then, it follows from the long exact sequence of non-abelian cohomology corresponding to the short exact sequence (\ref{s}) that
$$
0\longrightarrow C_c^\infty\big((Y_E\times X_E)\setminus(Y_F\times X_F);R\big)^\Gamma\longrightarrow
C_c^\infty(Y_E\times X_E;R)^\Gamma\longrightarrow
C_c^\infty(Y_F\times X_F;R)\longrightarrow 0.
$$
For any $\varphi\in C_c^\infty\big((Y_E\times X_E)\setminus(Y_F\times X_F);R\big)^\Gamma$, we have
$$
\int_{(Y_E\times X_E)\setminus(Y_F\times X_F)}\varphi\,d\mu_E\,d\nu_E = \sum_{i=0}^{l-1}
\int_{\gamma^i\mathcal{U}}\varphi\,d\mu_E\,d\nu_E = 0,
$$ 
Therefore, the linear functionals $d\mu_E$ and $d\nu_E$ induces a ($Y_F\times G_n(F)$)-equivariant linear functional on the space $C_c^\infty(Y_F\times X_F;R)$, and we have following identity:
$$
\int_{Y_E}\int_{X_E}\varphi\,d\mu\,d\nu_E = c\int_{Y_F}\int_{X_F}\varphi\,d\mu_F\,d\nu_F,
$$ 
for some non-zero $c\in R$. Now, we will show that $c$ is a unit in $R$. Consider the reduction mod-$\mathfrak{m}_R$ of the above integral, i.e.,
\begin{equation}\label{int_red}
\int_{Y_E}\int_{X_E}\overline{\varphi}\,d\mu_E\,d\nu_E = 
\bar{c} \int_{Y_F}\int_{X_F}
\overline{\varphi}\,d\mu_F\,d\nu_F,
\end{equation}
where the function $\overline{\varphi}$ is the pointwise mod-$\mathfrak{m}_R$ reduction of $\varphi$. Following \cite[Chapter 1, Section 2.8]{Vigneras-l-modular}, we get a surjection $\Psi:C_c^\infty(Y_E\times G_n(E);\overline{\mathbb{F}}_l)
\longrightarrow C_c^\infty(Y_E\times X_E;\overline{\mathbb{F}}_l)$, 
defined as
$$ \Psi(f)(x,g):=\int_{N_n(E)}f(x,ng)\,dn, $$ 
for all $f\in C_c^\infty(Y_E\times G_n(E);\overline{\mathbb{F}}_l)$. Here $dn$ is a Haar measure on $N_n(E)$. There exists a $\Gamma$-invariant compact open subgroup $I\subseteq G_n(E)$ such that $\Psi(1_{M_{r\times s}(\mathfrak{o}_E)\times I})\not=0$, where $M_{r\times s}(\mathfrak{o}_E)$ is the ring of $r\times s$ matrices with entries in $\mathfrak{o}_E$, and $1_{M_{r\times s}(\mathfrak{o}_E)\times I}$ denotes the characteristic function on $M_{r\times s}(\mathfrak{o}_E)\times I$. Therefore, the Haar measure $d\mu_E$ is non-zero on the space $C_c^\infty(Y_E\times X_E;\overline{\mathbb{F}}_l)^\Gamma$, and we have $\bar{c}\not=0$. This completes the proof.
\end{proof}
\begin{remark}\label{com_int_2}\normalfont
From now on, we choose the Haar measures $d\mu_E$ and $d\mu_F$ on $X_E$ and $X_F$ respectively, such that $c=1$. Then,
$$ 
\int_{Y_E}\int_{X_E}\varphi\,d\mu_E\,d\nu_E = \int_{Y_F}\int_{X_F}\varphi\,d\mu_F\,d\nu_F. 
$$ 
Let $e$ be the ramification index of the extension $E/F$. Then, for all $r \notin\{te:t\in\mathbb{Z}\}$, we have
$$ \int_{(X_E^r)^\Gamma}\varphi\,d\mu_F=0, $$  
and for all $r\in\{te:t\in\mathbb{Z}\}$, we have
$$ \int_{Y_E}\int_{(X_E^r)^\Gamma}\varphi\,d\mu_F\,d\nu_E =
\int_{Y_F}\int_{X_F^{\frac{r}{e}}}\varphi\,d\mu_F\,d\nu_F. $$
\end{remark}
\section{Tate cohomology of generic representations of ${\rm GL}_3$}\label{generic_case}
In this section, we prove Theorem \ref{intro_3_thm}. We also record some results which are useful in proving the main theorem.

\subsection{}
Let $F$ be a finite extension of $\mathbb{Q}_p$, and let $E$ be a finite Galois extension of $F$ with $[E:F]=l$, where $l$ and $p$ are distinct odd primes. Let $\Gamma$ be the Galois group ${\rm Gal}(E/F)$. We fix a generator $\gamma$ of $\Gamma$. Fix a non-trivial additive character $\psi_F:F\rightarrow (\mathbb{Z}_l^{un})^\times$, where $\mathbb{Z}_l^{un}$ is the ring of integers of the maximal unramified extension of $\mathbb{Q}_l$ in $\overline{\mathbb{Q}}_l$. Let $\psi_E$ be the composition $\psi_F\circ{\rm Tr}_{E/F}$, where ${\rm Tr}_{E/F}$ is the trace function. Let $\Theta_F$ (resp. $\Theta_E$) be the non-degenerate character of $N_3(F)$ (resp. $N_3(E)$), defined by (\ref{non_deg_char}). The mod-$l$ reduction of $\psi_K$ (resp. $\Theta_K$) is denoted by $\overline{\psi}_K$ (resp. $\overline{\Theta}_K$).
\subsection{}
Let $(\pi,V)$ be an integral, $l$-adic, generic representation of $G_3(E)$ such that $\pi\simeq\pi^\gamma$. Let $T$ be an isomorphism from $(\pi,V)$ onto $(\pi^\gamma,V)$ with $T^l={\rm id}$. The space $V$ is endowed with an action of $\Gamma$ via the isomorphism $T$, and it is compatible with the action of $\Gamma$ on $G_3(E)$ (\cite[Proposition 6.1]{Venkatesh-Treumann}). Let $J_l(\pi)$ be the unique generic sub--quotient of the mod-$l$ reduction of $\pi$. By \cite[Lemma 2.4]{nadimpalli2024tate}, the Whittaker model $\mathbb{W}(J_l(\pi),\overline{\psi}_E)$ of $J_l(\pi)$ is also invariant under the action of $\Gamma$, defined as
$$ (\gamma.W)(g) = W(\gamma^{-1}(g)), $$
for all $g\in G_3(E)$ and $W\in \mathbb{W}(J_l(\pi),\overline{\psi}_E)$. The following is an amalgamation of \cite[Proposition 5.5, Lemma 6.2, and Proposition 6.3]{nadimpalli2024tate}. This is crucial for the main theorem.
\begin{proposition}\label{Tate_finite_generic}
Let $\Pi$ be an $l$-modular generic representation of ${\rm GL}_3(E)$ with an isomorphism $\Pi\simeq \Pi^\gamma$. Then the Tate cohomology $\widehat{H}^0(\Pi)$ has finite length as a representation of ${\rm GL}_3(F)$, and it admits a unique generic sub--quotient. Moreover, for any parabolic subgroup $P=MU$ of ${\rm GL}_3$ with Levi subgroup $M$ and unipotent radical $U$, we have the $M(F)$ equivariant isomorphism
$$ \widehat{H}^0(\Pi_{U(E)}) \simeq \widehat{H}^0(\Pi)_{U(F)}. $$
\end{proposition}

We now prove the main theorem.
\begin{theorem}\label{generic_thm}
Let $F$ be a finite extension of $\mathbb{Q}_p$, and let $E$ be a finite Galois extension of $F$ of degree $l$, where $p$ and $l$ are distinct odd primes. Let $\pi_F$ be an integral, $l$-adic generic representation of $G_3(F)$, and let $\pi_E$ be the base change lifting of $\pi_F$ to $G_3(E)$. Then, the Frobenius twist $J_l(\pi_F)^{(l)}$ occurs as a unique generic sub--quotient of the Tate cohomology group $\widehat{H}^0(J_l(\pi_E))$.
\end{theorem}
\begin{proof}
We use the local converse theorem over Artin local $\overline{\mathbb{F}}_l$-algebras (Theorem \ref{converse_thm}) to prove the theorem. Using Proposition \ref{Tate_finite_generic}, the Tate cohomology group $\widehat{H}^0(J_l(\pi_E))$ admits a unique generic sub--quotient, say $\Pi$. We aim to show that
\begin{equation}\label{claim}
\gamma_R(X,\Pi,\eta_F,\overline{\psi}_F^l) =
\gamma_R(X,J_l(\pi_F)^{(l)},\eta_F,\overline{\psi}_F^l),
\end{equation}
for all characters $\eta_F:F^\times\rightarrow R^\times$ and for all Artin local $\overline{\mathbb{F}}_l$-algebras $R$.
We divide the proof into three parts. In the first part, we use Rankin--Selberg functional equation to show that the assertion (\ref{claim}) is equivalent to the following identity of $\gamma$-factors:
$$ \gamma_R(X,\Pi,\eta_0^l,\overline{\psi}_F^l) =
\gamma_R(X,J_l(\pi_F)^{(l)},\eta_0^l,\overline{\psi}_F^l), $$
where $\eta_0$ is an $R$-valued character of $F^\times$--which is trivial on the $l$-torsion part of the finite group $k_F^\times$ and $\eta_0^l$ coincides with $\eta_F$ on the prime-to-$l$-part of $F^\times$. In the second part, we prove that
$$ \gamma_R(X^l,\Pi,\eta_0^l,\overline{\psi}_F^l) =
\gamma_R(X,J_l(\pi_E),\eta_0\circ {\rm Nr}_{E/F},\overline{\psi}_F\circ {\rm Tr}_{E/F}), $$
where ${\rm Nr}_{E/F}$ and ${\rm Tr}_{E/F}$ are the norm function and trace function respectively. The third part proves the following identity:
$$ \gamma_R(X,J_l(\pi_E),\eta_0\circ {\rm Nr}_{E/F},\overline{\psi}_F\circ {\rm Tr}_{E/F}) =
\gamma_R(X^l,J_l(\pi_F)^{(l)},\eta_0^l,\overline{\psi}_F^l). $$
\subsubsection{}
Let $R$ be an arbitrary Artin local $\overline{\mathbb{F}}_l$-algebra, and let $\eta_F$ be an $R$-valued character of $F^\times$. For all $W\in\mathbb{W}(\Pi,\overline{\psi}_F^l)$ and $U\in\mathbb{W}(J_l(\pi_F)^{(l)},\overline{\psi}_F^l)$, we have the functional equations
\begin{equation}\label{fe_1}
\sum_{m\in\mathbb{Z}}c_m^F(\widetilde{W},\eta_F^{-1};1)
q_F^mX^{-m}=\gamma_R(X,\Pi,\eta_F,
\overline{\psi}_F^l)
\sum_{m\in\mathbb{Z}}c_m^F(W,\eta_F;0)X^m,
\end{equation}
\begin{equation}\label{fe_2}
\sum_{m\in\mathbb{Z}}c_m^F(\widetilde{U},\eta_F^{-1};1)	q_F^mX^{-m} = \gamma_R(X,J_l(\pi_F)^{(l)},\eta_F,\overline{\psi}_F^l)
\sum_{m\in\mathbb{Z}}c_m^F(U,\eta_F;0)X^m.
\end{equation}
Let $k_F^\times[l]$ be the $l$-torsion part of $k_F^\times$ of order, say $l^a$, and let $Y$ be the subgroup of $\mathfrak{o}_F^\times$ whose pro-order is coprime to $l$. We have $\mathfrak{o}_F^\times = k_F^\times[l]\times Y$. Fix a generator $x_0$ of the cyclic group $k_F^\times[l]$. Then, $\eta_F(x_0)-1$ is nilpotent, and hence belongs to the maximal ideal $\mathfrak{m}_R\subseteq R$. Let $u\in \mathfrak{m}_R$ be such that $\eta_F(x_0) = 1+u$. Let $\eta_0$ be an $R$-valued character of $F^\times$ such that $\eta_0$ is trivial on $k_F^\times[l]$ and $\eta_0^l$ coincides with $\eta_F$ on the subgroup generated by $Y$ and $\varpi_F$. Then, 
\begin{align*}
c_m^F(\widetilde{W},\eta_F^{-1};1)
&=\int_F\int_{\varpi_F^m\mathfrak{o}_F^\times}\widetilde{W}
\begin{pmatrix}
	g & 0 & 0\\
	x & 1 & 0\\
	0 & 0 & 1
\end{pmatrix}\otimes\eta_F(g)^{-1}\,dg\,dx\\
&=c_m^F(\widetilde{W},\eta_0^{-l};1)+\sum_{i=1}^{l^a}
\sum_{k=1}^i(-1)^k
\binom{i}{k}
\int_F\int_{\varpi_F^mx_0^iY}\widetilde{W}
\begin{pmatrix}
	g & 0 & 0\\
	x & 1 & 0\\
	0 & 0 & 1
\end{pmatrix}\otimes\eta_0(g)^{-l}u^k\,dg\,dx.
\end{align*}
and 
\begin{align*}
c_m^F(\widetilde{U},\eta_F^{-1};1)
&=\int_F\int_{\varpi_F^m\mathfrak{o}_F^\times}\widetilde{U}
\begin{pmatrix}
	g & 0 & 0\\
	x & 1 & 0\\
	0 & 0 & 1
\end{pmatrix}\otimes\eta_F(g)^{-1}\,dg\,dx\\
&=c_m^F(\widetilde{U},\eta_0^{-l};1)+\sum_{i=1}^{l^a}
\sum_{k=1}^i(-1)^k
\binom{i}{k}
\int_F\int_{\varpi_F^mx_0^iY}\widetilde{U}
\begin{pmatrix}
	g & 0 & 0\\
	x & 1 & 0\\
	0 & 0 & 1
\end{pmatrix}\otimes\eta_0(g)^{-l}u^k\,dg\,dx.
\end{align*}
Let $\varphi$ be the characteristic function of $U_F^1$. Since both the spaces $\mathbb{W}(\Pi,\overline{\psi}_F^l)$ and $\mathbb{W}(J_l(\pi_F)^{(l)},\overline{\psi}_F^l)$ contains the compact induction ${\rm ind}_{N_3(F)}^{P_3(F)}(\overline{\Theta}_F^l)$ as $P_3(F)$-representations, there exists $W_0\in\mathbb{W}(\Pi,\overline{\psi}_F^l)$ and $U_0\in\mathbb{W}(J_l(\pi_F)^{(l)},\overline{\psi}_F^l)$ such that 
$$ W_0
\begin{pmatrix}
	g & 0 & 0\\
	0 & 1 & 0\\
	0 & 0 & 1
\end{pmatrix} = U_0
\begin{pmatrix}
	g & 0 & 0\\
	0 & 1 & 0\\
	0 & 0 & 1
\end{pmatrix}=\varphi(g). $$ 
Since both the restrictions ${\rm res}_{P_3(F)}(W_0)$ and ${\rm res}_{P_3(F)}(U_0)$ are compactly supported, we have the following identities for each  $1\leq i\leq l^a$:
\begin{multline*}
\sum_{m\in\mathbb{Z}}\Big(\int_F\int_{\varpi_F^mx_0^iY }
\widetilde{W_0}
\begin{pmatrix}
	g & 0 & 0\\
	x & 1 & 0\\
	0 & 0 & 1
\end{pmatrix}
\otimes\eta_0^l(g^{-1})\,dg\,dx\Big)q_F^mX^{-m}\\
= \gamma_R(X,\Pi,\eta_0^l,\overline{\psi}_F^l)
\sum_{m\in\mathbb{Z}}\Big(\int_{\varpi_F^mx_0^iY }\varphi(g)\otimes \eta_0^l(g)\,dg\Big)X^m
\end{multline*}
and
\begin{multline*}
\sum_{m\in\mathbb{Z}}\Big(\int_F\int_{\varpi_F^mx_0^iY }
\widetilde{U_0}
\begin{pmatrix}
	g & 0 & 0\\
	x & 1 & 0\\
	0 & 0 & 1
\end{pmatrix}
\otimes\eta_0^l(g^{-1})\,dg\,dx\Big)q_F^mX^{-m}\\
= \gamma_R(X,J_l(\pi_F)^{(l)},\eta_0^l,\overline{\psi}_F^l)
\sum_{m\in\mathbb{Z}}\Big(\int_{\varpi_F^mx_0^iY}
\varphi(g)\otimes\eta_0^l(g)\,dg\Big)X^m.
\end{multline*}
Using these relations, it follows from the functional equations (\ref{fe_1}) and (\ref{fe_2}) that the assertion (\ref{claim}) is equivalent to following identity
$$ \gamma_R\big(X,\Pi,\eta_0^l,
\overline{\psi}_F^l\big) =
\gamma_R(X,J_l(\pi_F)^{(l)},\eta_0^l,\overline{\psi}_F^l). $$
	
\subsubsection{}
Since $\Pi$ is a generic sub--quotient of $\widehat{H}^0(J_l(\pi_E))$, there exists a $G_3(F)$ stable subspace $\mathcal{M}(\pi_E,\psi_E)$ of the Tate cohomology space $\widehat{H}^0(\mathbb{W}(J_l(\pi_E),\overline{\psi}_E))$ with the following $G_3(F)$ equivariant surjective map:
$$ \Phi : \mathcal{M}(\pi_E,\psi_E)\longrightarrow \mathbb{W}(\Pi,\overline{\psi}_F^l). $$
Let $W$ be an arbitrary element of $\mathbb{W}(\Pi,
\overline{\psi}_F^l)$. Using functional equation for the pair $(\Pi,\eta_0^l)$, we get 
\begin{equation}\label{func_equ_trivial_1}
\sum_{m\in\mathbb{Z}}c_m^F(\widetilde{W},\eta_0^{-l};1)
q_F^mX^{-m} = \gamma_R\big(X,\Pi,\eta_0^l,
\overline{\psi}_F^l\big)
\sum_{m\in\mathbb{Z}}c_m^F(W,\eta_0^l;0)X^m,
\end{equation}
\begin{equation}\label{func_equ_trivial_2}
\sum_{m\in\mathbb{Z}}c_m^F(\widetilde{W},\eta_0^{-l};0)
q_F^mX^{-m} = \gamma_R\big(X,\Pi,\eta_0^l,
\overline{\psi}_F^l\big)
\sum_{m\in\mathbb{Z}}c_m^F(W,\eta_0^l;1)X^m.
\end{equation}
Let $V$ be an element in $\mathcal{M}(\pi_E,\psi_E)$ such that $\Phi(V)=W$. Set $\eta_E=\eta_0\circ{\rm Nr}_{E/F}$,
where ${\rm Nr}_{E/F}$ is the norm function. Now, the functional equation for the pair $(J_l(\pi_E),\eta_E)$ gives 
$$
\sum_{m\in\mathbb{Z}}c_m^E(\widetilde{V},\eta_E^{-1};1)
q_F^{fm}X^{-fm} = \gamma_R(X,J_l(\pi_E),\eta_E,\overline{\psi}_E)
\sum_{m\in\mathbb{Z}}c_m^E(V,\eta_E;0)X^{fm}.
$$
Using Remark \ref{com_int_2} and applying the map $\Phi$ to the above identity, we get
$$
\sum_{m\in\mathbb{Z}}c_m^F(\widetilde{W},\eta_0^{-l};1)
q_F^{lm}X^{-lm} = \gamma_R(X,J_l(\pi_E),\eta_E,\overline{\psi}_E)
\sum_{m\in\mathbb{Z}}c_m^F(W,\eta_0^l;0)X^{lm}.
$$
Similarly, we have
$$ \sum_{m\in\mathbb{Z}}c_m^F(\widetilde{W},\eta_0^{-l};0)
q_F^{lm}X^{-lm} = \gamma_R(X,J_l(\pi_E),\eta_E,\overline{\psi}_E)
\sum_{m\in\mathbb{Z}}c_m^F(W,\eta_0^l;1)X^{lm}.
$$
Hence, it follows from (\ref{func_equ_trivial_1}) and (\ref{func_equ_trivial_2}), and the preceding two 
relations that
$$
\gamma_R(X,J_l(\pi_E),\eta_E,\overline{\psi}_E) =
\gamma_R(X^l,\Pi,\eta_0^l,\overline{\psi}_F^l).
$$
	
\subsubsection{}
In this part, we show that
$$
\gamma_R\big(X,J_l(\pi_E),\eta_E,\overline{\psi}_E\big) =
\gamma_R\big(X^l,J_l(\pi_F)^{(l)},\eta_0^l, 
\overline{\psi}_F^l\big).
$$
For each $1\leq i\leq s$ and $1\leq j \leq r$ with $r,s\leq 3$, let $\rho_F^i$ and $\rho_E^j$ be the irreducible $l$-modular representations 
of $\mathcal{W}_F$ and $\mathcal{W}_E$, respectively, such that the representation $\bigoplus_{i=1}^s \rho_F^i$ (resp. $\bigoplus_{j=1}^r \rho_E^r$) is associated with the supercuspidal support of $J_l(\pi_E)$ (resp. $J_l(\pi_F)$) under the semisimple mod-$l$ local Langlands correspondence (Subsection \ref{mod-l_LLC}).
Since $\pi_E$ is the base change lift of $\pi_F$, we get from Remark \ref{mod_l_bc_form} the following isomorphism:
\begin{equation}\label{base_change}
{\rm res}_{\mathcal{W}_E}(\bigoplus_{i=1}^s \rho_F^i) \simeq 
\bigoplus_{j=1}^r \rho_E^r.
\end{equation} 
Now, using the identity (\ref{LLC}) in Subsection \ref{LLC_nilpoent}, we have
\begin{align*}
\gamma_R(X,J_l(\pi_E),\eta_E,\overline{\psi}_E)
&=\gamma_R\big(X,(\bigoplus_{j=1}^r\rho_E^j)\otimes\eta_E,
\overline{\psi}_E\big)\\
&=\gamma_R\big(X,{\rm ind}_{\mathcal{W}_E}^{\mathcal{W}_F}\big((\bigoplus_{j=1}^r
\rho_E^j)\otimes\eta_E\big),\overline{\psi}_F\big).
\end{align*}
In view of the isomorphism (\ref{base_change}), we get the following equality
\begin{equation}\label{gamma_induced}
\gamma_R\big(X,J_l(\pi_E),\eta_E,\overline{\psi}_E\big)=
\prod_{i=1}^s\gamma_R\big(X,(\rho_F^i\otimes\eta_0)
\otimes{\rm ind}_{\mathcal{W}_E}^{\mathcal{W}_F}(R),
\overline{\psi}_F\Big).
\end{equation}
For each $1\leq i\leq s$, consider the $R$-linear map
$$
(\rho_F^i\otimes\eta_0)\otimes_{R}{\rm ind}_{\mathcal{W}_E}^{\mathcal{W}_F}(R)\xrightarrow{\Sigma}
(\rho_F^i\otimes\eta_0)\otimes_{R}{\rm ind}_{\mathcal{W}_E}^{\mathcal{W}_F}
(\overline{\mathbb{F}}_l),
$$
$$
(v\otimes f)\longmapsto (v\otimes\overline{f}),
$$ 
where $\overline{f}$ is induced by $f$ via the quotient map $R\rightarrow\overline{\mathbb{F}}_l$.
\begin{enumerate}
	\item Take $v\in\rho_F^i\otimes\eta_0$ and $\phi
	\in{\rm ind}_{\mathcal{W}_E}^{\mathcal{W}_F}
	(\overline{\mathbb{F}}_l)$. 
	Since the composition $\overline{\mathbb{F}}_l\hookrightarrow R\rightarrow\overline{\mathbb{F}}_l$ is the identity map, the element $\phi\in{\rm ind}_{\mathcal{W}_E}^{\mathcal{W}_F}(R)$ and 
	$$
	\Sigma(v\otimes\phi)=v\otimes\phi.
	$$
	Therefore, $\Sigma$ is surjective.
	\item ${\rm dim}_R((\rho_F^i\otimes\eta_0)\otimes_{R}{\rm ind}_{\mathcal{W}_E}^{\mathcal{W}_F}(R)) = {\rm dim}_R((\rho_F^i\otimes\eta_0)\otimes_{R}{\rm ind}_{\mathcal{W}_E}^{\mathcal{W}_F}
	(\overline{\mathbb{F}}_l))$.
	\item $\Sigma$ is compatible with the action of $\mathcal{W}_F$.
\end{enumerate}
Thus, $(1)-(3)$ implies that the map $\Sigma$ is a $\mathcal{W}_F$ equivariant isomorphism. The Jordan--Holder series 
of $\overline{\mathbb{F}}_l[\Gamma]$ gives the following equality of $\gamma$-factors
$$
\gamma_R\big(X,\rho_F^i\otimes\eta_0\otimes{\rm ind}_{\mathcal{W}_E}^{\mathcal{W}_F}(R),\overline{\psi}_F\big) =
\gamma_R(X,\rho_F^i\otimes\eta_0,\overline{\psi}_F)^l.
$$
Using the above relation and the identity (\ref{LLC}), it follows from (\ref{gamma_induced}) that
$$
\gamma_R(X,J_l(\pi_E),\eta_E,\overline{\psi}_E) =
\gamma_R(X,J_l(\pi_F),\eta_0,\overline{\psi}_F)^l.
$$
Finally, Lemma \ref{lemma_frob} gives
$$
\gamma_R\big(X,J_l(\pi_E),\eta_E,\overline{\psi}_E\big) =
\gamma_R(X^l,J_l(\pi_F)^{(l)},\eta_0^l,\overline{\psi}_F^l).
$$
This completes the proof.
\end{proof}
\subsection{Base change of cuspidal representations of ${\rm GL}_3$}
We now derive an important consequence of Theorem \ref{generic_thm}. We follow the notations of the previous subsection. Let $E/F$ be a finite cyclic extension of $p$-adic fields with $[E:F]=l$, where $l$ and $p$ are distinct primes. Let $\pi_F$ be an integral $l$-adic cuspidal representation of $G_3(F)$, and let $\pi_E$ be the base change lifting of $\pi_F$ to $G_3(E)$. Then $\pi_E$ is also integral (Lemma \ref{bc_integral}). It follows from \cite[Chapter 1, Lemma 6.10]{Arthur_Clozel_BC} that $\pi_E$ is either a cuspidal representation, or it is a principal series representation of the form
$$ {\rm ind}_{B_3(E)}^{G_3(E)}(\eta\otimes\eta^\gamma\otimes \eta^{\gamma^2}), $$ 
where $B_3(E)$ is the group of upper triangular matrices in $G_3(E)$ and $\eta$ is an $l$-adic character of $E^\times$. In each case, the mod-$l$ reduction $r_l(\pi_E)$ is also irreducible. Then we have the following lemma.
\begin{lemma}\label{Tate_irr}
The Tate cohomology group $\widehat{H}^0(r_l(\pi_E))$ is an $l$-modular cuspidal representation of $G_3(F)$.
\end{lemma}
\begin{proof}
We first consider the case when $\pi_E$ is cuspidal. Then the mod-$l$ reduction $r_l(\pi_E)$ is also cuspidal. Moreover, the restriction ${\rm res}_{P_3(E)}(r_l(\pi_E))$ is also irreducible and is isomorphic to ${\rm ind}_{N_3(E)}^{P_3(E)}\overline{\Theta}_E$. Then, using Proposition \ref{TV_isom}, we get the following $P_3(F)$ isomorphism
$$ \widehat{H}^0(r_l(\pi_E)) \simeq {\rm ind}_{N_3(F)}^{P_3(F)}\overline{\Theta}_F^l. $$
This shows that the Tate cohomology $\widehat{H}^0(r_l(\pi_E))$ is irreducible as a representation of $P_3(F)$, and hence of $G_3(F)$. Let $P$ be a proper parabolic subgroup of $G_3$. Then using Proposition \ref{Tate_finite_generic}, we get the following isomorphism: 
$$ \widehat{H}^0(r_l(\pi_E)_{P(E)}) \simeq 
\widehat{H}^0(r_l(\pi_E))_{P(F)}. $$
Since $r_l(\pi_E)$ is cuspidal, the Jacquet module $r_l(\pi_E)_{P(E)}$ is trivial. From the above isomorphism, we get $\widehat{H}^0(r_l(\pi_E))_{P(F)}$ is trivial. This proves that $\widehat{H}^0(r_l(\pi_E))$ is cuspidal.

Next, we consider the case when $\pi_E$ is the principal series representation
$$ {\rm ind}_{B_3(E)}^{G_3(E)}(\eta\otimes\eta^\gamma\otimes \eta^{\gamma^2}). $$
The mod-$l$ reduction $r_l(\pi_E)$ is again a principal series representation ${\rm ind}_{B_3(E)}^{G_3(E)}(\overline{\eta}\otimes
\overline{\eta}^\gamma\otimes\overline{\eta}^{\gamma^2})$, where $\overline{\eta}$ is the mod-$l$ reduction of $\eta$. If $P$ is a proper parabolic subgroup of $G_3$, then the Jacquet module $r_l(\pi_E)_{P(E)}$ is $\Gamma$-invariant but it has no fixed point under the action of $\Gamma$. Therefore, the Tate cohomology $\widehat{H}^0(r_l(\pi_E)_{P(E)})$ is trivial.
From Proposition \ref{Tate_finite_generic}, we have 
$$ 
\widehat{H}^0(r_l(\pi(E))_{P(E)}) \simeq 
\widehat{H}^0(r_l(\pi_E))_{P(F)}. 
$$ 
Thus, the Jacquet module $\widehat{H}^0(r_l(\pi_E))_{P(F)}$ is also trivial. Since the Tate cohomology group $\widehat{H}^0(r_l(\pi_E))$ is of finite length and has a unique generic sub--quotient (Proposition \ref{Tate_finite_generic}), we get that $\widehat{H}^0(r_l(\pi_E))$ is irreducible, and hence cuspidal.
\end{proof}
\begin{theorem}\label{bc_cuspidal}
Let $E/F$ be a finite Galois extension of $p$-adic fields with $[E:F]=l$, where $l$ and $p$ are distinct odd primes. Let $\pi_F$ be an integral, $l$-adic cuspidal representation of $G_3(F)$, and let $\pi_E$ be the base change lifting of $\pi_F$. Then we have
$$ \widehat{H}^0(r_l(\pi_E)) \simeq r_l(\pi_F)^{(l)}. $$ 
\end{theorem}
\begin{proof}
The proof is immediate from Theorem \ref{generic_thm} and Lemma \ref{Tate_irr}.
\end{proof}

\textbf{Acknowledgements.} The author is grateful to Santosh Nadimaplli for suggesting the problem and the several discussions during this work. Thanks to Gilbert Moss for  helpful conversations.

\bibliographystyle{amsalpha}
\bibliography{bc_GL3_paper}		
	
\noindent\\
Department of Mathematics and Statistics,\\ Indian
Institute of Technology Kanpur, U.P. 208016, India.\\
\texttt{Email} : \texttt{sabya@iitk.ac.in},\, \texttt{mathsabya93@gmail.com}

\end{document}